\numberwithin{equation}{section}
\newtheorem{Thm}{Theorem}[section]
\newtheorem{Lem}{Lemma}[section]
\newtheorem{Prop}{Proposition}[section]
\newtheorem{Rmk}{Remark}[section]
\theoremstyle{definition}
\title{Averaging of time--periodic systems
without a small parameter}
\title[Averaging without a small parameter]
{Averaging of time--periodic systems without\\ a small parameter}
\author[M.D. Chekroun, M. Ghil, J.Roux and F. Varadi]{}
\email{chekro@lmd.ens.fr {\rm (M.D. Chekroun)}}
  \subjclass{34C20, 34C25, 34C29, 37C15.}
   \keywords{Lie transforms, averaging method, equivalence, periodic solutions.}
\thanks{The present manuscript has been published as:
\textsc{M.D. Chekroun, M. Ghil, J.Roux and F. Varadi}, in {\it
Discr. and Cont. Dyn. Syst., Ser. A}, {\bf 14}(4), (2006) 753--782,
and includes here furthermore, in Appendix 2, the proof of Lemma 6.1
of this published version.}
\begin{document}
\maketitle

 \centerline{\scshape Micka\"el D. Chekroun}
{\footnotesize
 \centerline{Laboratoire Jacques-Louis Lions}
 \centerline{Universit\'e Pierre et Marie Curie, Paris, France}
  \centerline{and}
   \centerline{Environmental Research and Teaching Institute}
   \centerline{\'{E}cole Normale Sup\'erieure, Paris, France}
   }

\medskip
\centerline{\scshape Michael Ghil}

{\footnotesize

\centerline{D\'{e}partement Terre-Atmosph\`{e}re-Oc\'{e}an and}
\centerline{Laboratoire de M\'{e}t\'{e}orologie Dynamique du
CNRS/IPSL} \centerline{\'{E}cole Normale Sup\'{e}rieure, Paris,
France} \centerline{and} \centerline{Department of Atmospheric and
Oceanic Sciences and} \centerline{Institute of Geophysics and
Planetary Physics} \centerline{University of California, Los
Angeles, U.S.A.} }

\medskip

 \centerline{\scshape Jean Roux}

{\footnotesize \centerline{Laboratoire de M\'{e}t\'{e}orologie
Dynamique du CNRS/IPSL}
 \centerline{\'{E}cole Normale Sup\'erieure, Paris, France}
   }

 \medskip
 \centerline{\scshape Ferenc Varadi}

{\footnotesize \centerline{Institute of Geophysics and Planetary
Physics} \centerline{University of California, Los Angeles, U.S.A.}
}

\medskip

\begin{abstract}
In this article, we present a new approach to averaging in
non-Hamiltonian systems with periodic forcing. The results here do
not depend on the existence of a small parameter. In fact, we show
that our averaging method fits into an appropriate nonlinear
equivalence problem, and that this problem can be solved formally by
using the Lie transform framework to linearize it. According to this
approach, we derive formal coordinate transformations associated
with both first-order and higher-order averaging, which result in
more manageable formulae than the classical ones.

Using these transformations, it is possible to correct the solution
of an averaged system by recovering the oscillatory components of
the original non-averaged system. In this framework, the inverse
transformations are also defined explicitly by formal series; they
allow the estimation of appropriate initial data for each
higher-order averaged system, respecting the equivalence relation.

Finally, we show how these methods can be used for identifying and
computing periodic solutions for a very large class of nonlinear
systems with time-periodic forcing. We test the validity of our
approach by analyzing both the first-order and the second-order
averaged system for a problem in atmospheric chemistry.
\end{abstract}

\numberwithin{equation}{section}

\section{Introduction}

The theory of averaging falls within the more general theory of
normal forms; it arises from perturbation analysis and is thus
formulated usually for equations which contain a small parameter
$\epsilon$. A vast literature treats this case (e.g. \cite{a, ch,
esd, gh, hale, lo, mur, per, sv}, and references therein). In this
paper we develop a time-averaging theory for the case of
time-periodic nonautonomous systems without an explicit dependence
on a small parameter. The theory is presented here formally, subject
to a conditional statement to be explained later. It is shown to
apply numerically to our test problem, which does contain a
``large-amplitude" periodic perturbation.

Among the methods of derivation of averaged systems, those based on
Lie transforms offer an efficient computational framework (cf.
\cite{ch, na}, or more recently \cite{jap}). The rigorous use of
such techniques, however,  is confined so far to the construction of
near--identity transformations expanded in a small parameter.

In order to overcome the main issues raised by the lack of
$\epsilon$-parametrization, we consider the problem in terms of
``differentiable equivalence" between the time-dependent vector
field and a corresponding averaged form, allowing us to put the
problem in an appropriate Lie transform framework.

In Hamiltonian dynamics, G. Hori \cite{h} was the first to use Lie
series for coordinate transformations, while A. Deprit \cite{d}
introduced the slightly different formalism of Lie transforms; the
two were shown to be equivalent by J. Henrard and J. Roels
\cite{her}. They developed a new, efficient way to construct
canonical transformations for perturbation theory in celestial
mechanics. The Poisson bracket in their formulae can be replaced by
the Lie bracket to obtain a representation of general, non-canonical
transformations \cite{ch, he, k1}. Nowadays, Lie transforms are
applied in many research fields, such as artificial satellites
\cite{ste}, particle accelerator design \cite{dra} and optical
devices \cite{gab}, to mention just a few.

The object of the present article is to perform averaging analysis
for a $T$-periodic $N$-dimensional system ($N\geq 2, T>0$) of ODEs
\begin{equation}\label{generalsystem}
 \frac{dx}{dt}=Y(t,x)=Y_{t}(x),\quad x\in \Omega,
\end{equation}
where $\Omega$ is an open subset of $\mathbb{R}^{N}$. The precise
framework is given in \S \ref{LTS}.

In the present paper, we consider the averaging process via an
appropriate notion of differentiable equivalence (Definition
\ref{Equivalence}). This equivalence concept allows us to determine
the  underlying time-dependent family of diffeomorphisms, as the
solution of the nonlinear functional equation (\ref{bigproblem}).
This equation plays a central role in our approach, as it allows one
to generalize the Lie transform formalism. We put it into a pullback
form that falls into this formalism via suspension (Proposition
\ref{suspensiontool} and Appendix 1). This process allows us to
obtain a computable formal solution of equation (\ref{bigproblem})
(Theorem \ref{diffeorepre}), based on the solvability of a recursive
family of linear PDEs, typically known as Lie's equations \cite{ch,
hen2, v}. It is shown in \S \ref{SolvingLie} and \S \ref{corectave}
that Lie's equations can be solved easily in our framework.

This approach is consistent with other classical equivalence or
conjugacy problems, in  which the so-called homological equations
linearizing
 such problems are a cornerstone in proving Anosov's theorem or
 Hartman--Grobman theorem (cf. \cite{a} for details). A recent proof of
 Kolmogorov's theorem on the conservation of invariant tori also
 used this classical framework \cite{hu}. The essential difference
 between our approach and the classical one is that
  (\ref{bigproblem}) involves a ``conjugacy" (see Definition \ref{Equivalence}
  for the right
  notion) between a nonautonomous
 field in the original problem and an autonomous one in the
 transformed problem. The formal developments in \S \ref{LTS} to
 \S \ref{seccondinit} are contingent upon Lemma 6.1.
 This Lemma will be proven in a subsequent paper (in preparation), and permits a
 rigorous proof of the existence of a solution to  equation
 (\ref{bigproblem}). The numerical results in \S \ref{numtest} and
 \S \ref{period} demonstrate the usefulness of the present approach and the
 plausibility of the forthcoming rigorous results.

Our approach provides, furthermore, simpler formulae for
higher-order averaged systems, even in the classical context of
perturbations scaled by a small parameter $\epsilon$. This is shown
in Proposition \ref{nthaveraging} and is due to the suspension
associated with solving (\ref{bigproblem}); see \S \ref{corectave},
\S \ref{heuristic} and \cite{esd, per}.

We show that the classical change of coordinates on the initial data
respects the differentiable equivalence relation (\ref{bigproblem})
at time $t=0$. It arises here in a more natural way from the
generator of the family of inverse diffeomorphims at time $t=0$; see
Proposition \ref{inverserepre} and \cite{hen}.

We demonstrate that the Lie transform framework permits gaining CPU
time with respect to the standard numerical methods by, loosely
speaking, integrating the averaged system with a larger step size;
this is followed by performing efficient corrections with a step
size adapted to the oscillations of the forcing terms. We test the
validity of this numerical approach by computing the second-order
terms in our atmospheric-chemistry problem. More precisely, we build
a second-order averaged system in this test problem and the relevant
correction in the same framework, after applying the
averaging-correction method at the first order.

In \S \ref{LTS} we describe how  Lie transforms allow us to resolve
formally a differentiable equivalence problem between a
time-periodic system and an autonomous one. In \S \ref{higherorder},
we give a formal algorithm of computing higher-order averaged
systems based on Lie transforms and related corrections; furthermore
we compare our results to the classical ones. In \S
\ref{seccondinit} we clarify how to compute initial data which
satisfy the differentiable equivalence relation. Numerical results
and advantages of this technique are discussed in \S \ref{numtest}
for a simple model of atmospheric chemistry. In \S \ref{period}, we
comment on the use of these methods for constructing periodic
orbits, in nonautonomous {\em dissipative} systems with periodic
forcing. In the Appendix 1, the Lie transform framework is presented
as a tool for solving pullback problems in general.

\section{The Lie transform setting and the differentiable equivalence
problem}\label{LTS}
\subsection{Fitting the differentiable equivalence problem into the Lie
transform formalism}\label{fiteq} For all the rest of the paper
$\Omega$ will be an open subset of $\mathbb{R}^{N}$, $N$ will be a
positive integer, and $T$ will be a positive real.

For subsequent computations we shall consider
$\mathcal{Q}^{k}(\Omega)$ as the class of continuous functions
$f:\mathbb{R}^{+}\times \Omega\rightarrow \mathbb{R}^{N}$ which are
$T$-periodic in $t$ for each $x\in \Omega$ and which have
$k$-continuous partial derivatives with respect to $x$ for $x\in
\Omega$ and $t$ in $\mathbb{R}^{+}$. More precisely we will work
with $\mathcal{Q}^{\infty}(\Omega)=\bigcap_{k\geq
0}\mathcal{Q}^{k}(\Omega)$ when we deal with Lie transforms as often
it is requires in this framework for dependence on the spatial
variables (cf. Appendix 1).

There are known results on the boundedness and the global existence
in time of solutions of ODEs. For instance, by an application of
Gronwall's lemma it is easy to show that if there exists $\alpha,
\beta \in \mathcal{C}^{0}(\mathbb{R}^{+},\mathbb{R}^{+})\cap
L^{1}(\mathbb{R}^{+},\mathbb{R})$ such that $\Vert Y(t,x)\Vert\leq
\alpha(t)\Vert x \Vert+\beta(t)$, for all
$(t,x)\in\mathbb{R}^{+}\times\Omega$,
 then every solution of $\dot{x}=Y(t,x)$ is
bounded. Thus, using the continuation theorem (cf. theorem 2.1 of
\cite{hale} for instance), we can prove that the solutions of the
above equations are global in time.

Taking into consideration this fact, we make the following
assumption.
\begin{itemize}
\item[($\lambda$)] All solutions remain in $\Omega$, for all
considered non-autonomous and autonomous vector fields defined on
$\Omega$.
\end{itemize}

Henceforth, for all $1\leq k\leq\infty$, we introduce
$\mathcal{P}^{k}(\Omega):=\mathcal{Q}^{k}(\Omega)\cap
\{f:\mathbb{R}^{+}\times\Omega\rightarrow\Omega\}$ and
$\mathcal{C}^{k}(\Omega):=\mathcal{C}^{k}(\Omega,\Omega)$.

 We describe some concepts related to the subject of this
paper, dealing with differentiable equivalence between a {\em
nonautonomous} vector field  and an {\em autonomous} one.

\vspace{1ex} \noindent {\bf Definition 2.1. }\label{Equivalence}
{\em Let $r$ be a positive integer, let $Y \in
\mathcal{P}^{r}(\Omega)$ and $Z \in \mathcal{C}^{r}(\Omega)$, then
$Y$ and $Z$ are said to be $\mathcal{P}^{k}_{diff}$-equivalent
($k\leq r$), if there exists a map $\Phi \in
\mathcal{P}^{k}(\Omega)$ such that for all $t \in \mathbb{R}^{+}$,
$\Phi_t :=\Phi(t,\cdot):\Omega \rightarrow \Omega$ is a
$\mathcal{C}^{k}$-diffeomorphism, which carries the solutions
$x(t,x_0)$ of $\dot{x}=Y(t,x)$, for $x(0,x_0)=x_0$ varying in
$\Omega$, into the solutions $z(t,z_0)$ of $\dot{z}=Z(z)$, in the
sense that:
\begin{equation}\label{equiv-sol}
x(t,x_0)=\Phi_t(z(t,\Phi_{0}^{-1}(x_0))), \mbox{ for all } t \in
\mathbb{R}^{+}, (z_0=\Phi_{0}^{-1}(x_0)).
\end{equation}}

Using the notations in this definition, we shall say that if
(\ref{equiv-sol}) is satisfied for a pair $(x, z)$ of solutions,
then $x$ and $z$ are in {\em $\mathcal{P}^{k}_{diff}$-correspondence
by $\Phi$}, or, sometimes that they are in {\em
$\mathcal{P}^{k}_{diff}$-correspondence by
$(\Phi_t)_{t\in\mathbb{R}^+}$}.

Furthermore, we will often use for the underlying transformations
the functional space
\begin{eqnarray}
\mathcal{P}^{k}_{d}(\Omega):=\{\Phi \in \mathcal{P}^{k}(\Omega)
\mbox{ such that, for all } t \in \mathbb{R}^{+}, \Phi_t :\Omega
\rightarrow \Omega \nonumber \\ \mbox{ is a }
\mathcal{C}^{k}\mbox{-diffeomorphism} \mbox{, and } t\rightarrow
\Phi_t^{-1} \mbox{ is } \mathcal{C}^{1}\}, \end{eqnarray} where the
smoothness assumption on the map $t\rightarrow \Phi_t^{-1}$, will be
apparent from the proof of Lemma \ref{conjugpull}. \vspace{1ex}

\begin{Rmk} It is important to note here that the
$\mathcal{P}^{k}_{diff}$-equivalence of Definition \ref{Equivalence}
does not conserve the period of the trajectories although it
preserves parametrization by time and sense, and, in this meaning,
realizes a compromise between the classical notions of conjugacy and
equivalence (e.g. \cite{gh}). This will be essential in \S
\ref{period}.
\end{Rmk}

Let us recall the classical definition of pullback of autonomous
vector fields.

\vspace{1ex}
 \noindent{\bf Definition 2.2.}\label{definitionimage} {\em Let $U$ and
$\mathcal{O}$ be open subsets of $\mathbb{R}^{N}$. Let $X$ be a
vector field of class at least $\mathcal{C}^{0}$ on $\mathcal{O}$,
and let $\Phi \in \mathcal{C}^{1}(U,\mathcal{O})$ be a
$\mathcal{C}^{1}$-diffeomorphism. The pullback of X on $\mathcal{O}$
by $\Phi$ is the vector field defined on $U$ as the map
\begin{equation}
y\to (\Phi^{-1}\ast X)(y):=(D\Phi ^{-1})(\Phi (y)) \cdot X(\Phi
(y)),\mbox{ for all } y\in U=\Phi^{-1}(\mathcal{O}).
\end{equation}}

\begin{Rmk} Sometimes we will employ the notation
$({\Phi}^{\ast}X)(y):=(D\Phi(y))^{-1}\cdot X(\Phi(y))$ for
$(\Phi^{-1}\ast X)(y)$.
\end{Rmk}

 The problem of $\mathcal{P}^{k}_{diff}$-equivalence  takes the form of a family of
``pulled back" problems via the
\begin{Lem}
\label{conjugpull} Let $Y$ and $Z$ as in Definition
\ref{Equivalence}. Let $x:\mathbb{R}^{+}\rightarrow \Omega$ a
solution of $\dot{x}=Y(t,x):=Y_t(x)\,,x(0)=x_0 \in \Omega$, and $z$
an integral curve of $Z$. Then, $x$ and $z$ are in
$\mathcal{P}^{k}_{diff}$-correspondence by $\Phi \in
\mathcal{P}^{k}_{d}(\Omega)$ if and only if $z$ is solution of the
IVP: $\dot{y}=(\Phi_{t}^{-1}\ast
Y_t)(y)+\partial_t\Phi_t^{-1}(\Phi_t(y))\,,\,y(0)=\Phi_0^{-1}(x_0)$.
\end{Lem}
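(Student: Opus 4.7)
The plan is to unfold the definitions and proceed by straight differentiation, using the chain rule together with the identity obtained by differentiating $\Phi_t\circ\Phi_t^{-1}=\mathrm{id}_\Omega$ in $t$. The smoothness hypothesis $t\mapsto\Phi_t^{-1}\in\mathcal{C}^1$ built into $\mathcal{P}^k_d(\Omega)$ is precisely what legitimates all the time-derivatives below; no other nontrivial ingredient is needed beyond uniqueness for the IVP associated with $Y$, which is available since $Y\in\mathcal{P}^r(\Omega)$ with $r\geq 1$.

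For the direct implication, I assume $x$ and $z$ are in $\mathcal{P}^k_{diff}$-correspondence by $\Phi$, so that $z(t)=\Phi_t^{-1}(x(t))$. Differentiating in $t$ and using the chain rule I obtain
\begin{equation*}
\dot z(t) = \partial_t\Phi_t^{-1}(x(t)) + D\Phi_t^{-1}(x(t))\cdot\dot x(t),
\end{equation*}
and substituting $\dot x(t)=Y_t(x(t))$ together with $x(t)=\Phi_t(z(t))$ transforms the right-hand side into
$\partial_t\Phi_t^{-1}(\Phi_t(z(t)))+D\Phi_t^{-1}(\Phi_t(z(t)))\cdot Y_t(\Phi_t(z(t)))$, which is exactly $\partial_t\Phi_t^{-1}(\Phi_t(z))+(\Phi_t^{-1}\ast Y_t)(z)$ by Definition 2.2. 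The initial condition $z(0)=\Phi_0^{-1}(x_0)$ is immediate from evaluating at $t=0$.

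For the converse, assume $z$ solves the stated IVP; define $\tilde x(t):=\Phi_t(z(t))$, which satisfies $\tilde x(0)=\Phi_0(\Phi_0^{-1}(x_0))=x_0$. Differentiating,
\begin{equation*}
\dot{\tilde x}(t) = \partial_t\Phi_t(z(t)) + D\Phi_t(z(t))\cdot\dot z(t),
\end{equation*}
and inserting the hypothesized equation for $\dot z$ produces three terms. The term $D\Phi_t(z)\cdot D\Phi_t^{-1}(\Phi_t(z))\cdot Y_t(\Phi_t(z))$ collapses to $Y_t(\tilde x)$ because $D\Phi_t(z)$ and $D\Phi_t^{-1}(\Phi_t(z))$ are mutual inverses. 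The remaining terms, $\partial_t\Phi_t(z)+D\Phi_t(z)\cdot\partial_t\Phi_t^{-1}(\Phi_t(z))$, vanish by the key identity obtained by differentiating the relation $\Phi_t(\Phi_t^{-1}(u))=u$ with respect to $t$ and evaluating at $u=\Phi_t(z)$. Hence $\dot{\tilde x}=Y_t(\tilde x)$ with $\tilde x(0)=x_0$, and uniqueness for $\dot x=Y_t(x)$ forces $\tilde x=x$, i.e. $x(t)=\Phi_t(z(t))$.

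The only real subtlety is the bookkeeping of base points: the terms $\partial_t\Phi_t^{-1}$ and $D\Phi_t^{-1}$ must be correctly evaluated at $x(t)=\Phi_t(z(t))$ (rather than at $z(t)$), and likewise the identity used in the converse must be evaluated at $u=\Phi_t(z)$ (not at $z$). Once those arguments are placed carefully, the proof is essentially a clean chain-rule computation, and I would present it as a short series of two displayed equations — one per direction — followed by the inverse-matrix cancellation and the appeal to uniqueness.
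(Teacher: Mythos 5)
Your proof is correct and follows exactly the route the paper intends: its proof of Lemma 2.1 is just the remark that the statement is an ``obvious application of the chain rule formula and Definition 2.2,'' and your two chain-rule computations (with the identity from differentiating $\Phi_t\circ\Phi_t^{-1}=\mathrm{id}$ and the appeal to uniqueness for $\dot{x}=Y_t(x)$) are the natural filling-in of that one-line argument. Nothing to add.
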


\noindent
 {\em Proof:} The proof of this lemma is an obvious
application of the chain rule formula and Definition
\ref{definitionimage}.$\qquad \square$\vspace{2ex}

 Let us now introduce the projection
\begin{equation}\label{defpi}
\pi : \left\{
\begin{array}{l}
\mathbb{R}^{N+1} \to \mathbb{R}^{N}\\
(t,x) \to x
\end{array} \right..
\end{equation}

We define also a map, defined for each $t\in\mathbb{R}^{+}$, as
\begin{equation}\label{defit}
\mathcal{I}_t : \left\{
\begin{array}{l}
\mathbb{R}^{N} \to \mathbb{R}^{N+1}\\
x \to (t,x)
\end{array} \right.,
\end{equation}
which will be $T$-periodic in time $t$.

According to Lemma \ref{conjugpull}, if there exists a map $\Phi \in
\mathcal{P}^{k}_{d}(\Omega)$ which satisfies the following nonlinear
functional equation, (called in this work {\em equivalence problem})
\begin{equation}\label{bigproblem}
\Phi_{t}^{-1}\ast Y_t+\partial_t\Phi^{-1}_t\circ\Phi_t=Z, \mbox{ for
all } t\in\mathbb{R}^{+},
\end{equation}
then every integral curve of $Z$ is in
$\mathcal{P}^{k}_{diff}$-correspondence by $\Phi$ to a unique
solution of: $\dot{x}=Y(t,x)$, that is the vector fields $Y$ and $Z$
are $\mathcal{P}^{k}_{diff}$-equivalent.

The problem is then to solve in
$\Phi\in\mathcal{P}^{k}_{d}(\Omega)$, the equation
(\ref{bigproblem}) for a given pair of fixed vector fields
$Y\in\mathcal{P}^{r}(\Omega)$ (non-autonomous) and
$Z\in\mathcal{C}^{r}(\Omega)$ (autonomous).

In order to show how Lie transforms formalism (e.g., \cite{ch, d,
her} and Appendix 1) can be used for solving  formally equation
(\ref{bigproblem}) with $r=k=\infty$, we put this equation in an
appropriate pullback form by the Proposition \ref{suspensiontool}
below.

For that we introduce some notations. We denote by $\widetilde{Y}$
the so-called $t$-suspended vector field associated with
(\ref{generalsystem}) written in terms of the enlarged vector of
dynamical variables, $\widehat{x}=(t,x_{1},\cdots,x_{N})^T$, thus if
$x$ is a solution of $\dot{x}=Y(t,x)$, then $\widehat{x}$ satisfies
\begin{equation}  \label{suspendedY}
\frac{d\widehat{x}}{dt}=\widetilde{Y}(\widehat{x})
 \mbox{  where  }\widetilde{Y}=\left[
1_{\mathbb{R}},Y \right]^{T}.
\end{equation}
Note that by assumption ($\lambda$),
 every solution $\widehat{x}$ of
(\ref{suspendedY}) is contained in $\widetilde{\Omega}:=\{(t,x)\in
\mathbb{R}^{+}\times\Omega\}$, for the rest of the paper.

\begin{Rmk} Note that for the rest of the paper the $t$-suspended form of
a (time-dependent or not) vector field $X$ on $\mathbb{R}^N$ will be
defined as $\widetilde{X}=\left[ 1_{\mathbb{R}},X \right]^{T}$, and
the {\em flat form} of $X$ will be defined as $\left[
0_{\mathbb{R}}, X\right]^{T}$. The same notation $\widetilde{X}$
will be used for the latter, by abusing the notation. No confusion
will be made with regard to the context. \end{Rmk}

\begin{Prop}\label{suspensiontool}
Let $Z\in\mathcal{C}^{\infty}(\Omega)$ be an autonomous vector field
and let $Y \in \mathcal{P}^{\infty}(\Omega)$, with $\widetilde{Z}$
and $\widetilde{Y}$ as their respective $t$-suspended forms. Let
$\Theta \in \mathcal{P}^{\infty}_{d}(\widetilde{\Omega})$. If
$\Theta $ satisfies $\Theta^{-1}_t\ast\widetilde{Y}=\widetilde{Z}$,
for all $t \geq 0$, then the family of
$\mathcal{C}^{\infty}$-diffeomorphisms
$(\pi\circ\Theta_t\circ\mathcal{I}_t)_{t\in\mathbb{R}^{+}}$, acting
on $\Omega$, satisfies the equation (\ref{bigproblem}). The converse
is also true.
\end{Prop}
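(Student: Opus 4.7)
The plan is to prove both implications by a direct chain-rule computation, after first recasting (\ref{bigproblem}) in a form free of $\Phi_t^{-1}$. Differentiating $\Phi_t^{-1}\circ\Phi_t = \mathrm{id}_{\Omega}$ in $t$ gives $\partial_t\Phi_t^{-1}(\Phi_t(x)) = -(D\Phi_t(x))^{-1}\partial_t\Phi_t(x)$; combined with $D\Phi_t^{-1}(\Phi_t(x)) = (D\Phi_t(x))^{-1}$, equation (\ref{bigproblem}) becomes, after left-multiplication by $D\Phi_t(x)$, the equivalent forward relation
\begin{equation*}
\partial_t\Phi_t(x) + D\Phi_t(x)\cdot Z(x) \;=\; Y(t,\Phi_t(x)),\qquad t\ge 0,\; x\in\Omega.\qquad(\star)
\end{equation*}
It is $(\star)$ that I would verify in both directions.

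For the forward implication, I would write $\Theta_t(s,\xi) = (A(t,s,\xi),F(t,s,\xi))$ with $A\in\mathbb{R}^{+}$ and $F\in\Omega$, so that $\Phi_t(x) = F(t,t,x)$. Unpacking the hypothesis $\widetilde{Y}\circ\Theta_t = D\Theta_t\cdot\widetilde{Z}$ componentwise with $\widetilde{Y} = [1_{\mathbb{R}},Y]^{T}$ and $\widetilde{Z} = [1_{\mathbb{R}},Z]^{T}$ gives
\begin{equation*}
\partial_s A + \partial_\xi A\cdot Z(\xi) = 1,\qquad \partial_s F + \partial_\xi F\cdot Z(\xi) = Y\!\bigl(A(t,s,\xi),F(t,s,\xi)\bigr),
\end{equation*}
valid for every $t,s,\xi$. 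In the Lie-transform construction of Appendix~1, $\Theta$ is produced so as to preserve the first (time) coordinate, $A(t,s,\xi) = s$, and to have no nontrivial outer-parameter dependence, $\partial_t F\equiv 0$. Evaluating the second equation on the diagonal $(s,\xi)=(t,x)$ then recovers $(\star)$, because under these normalizations $\partial_t\Phi_t(x) = \partial_s F(t,t,x)$ and $D\Phi_t(x) = \partial_\xi F(t,t,x)$.

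For the converse, given $\Phi$ satisfying (\ref{bigproblem}), I would introduce the canonical outer-parameter-independent lift $\Theta(s,\xi) := (s,\Phi_s(\xi))$. Its inverse $(s,\eta)\mapsto(s,\Phi_s^{-1}(\eta))$ is $\mathcal{C}^{\infty}$ by hypothesis, so $\Theta\in\mathcal{P}^{\infty}_{d}(\widetilde{\Omega})$. The Jacobian is block-triangular with top-left entry $1$, and a direct computation reduces $\Theta^{-1}\ast\widetilde{Y} = \widetilde{Z}$ componentwise to the tautology $1=1$ and to $(\star)$ (with $t$ relabelled $s$). Finally $\pi\circ\Theta\circ\mathcal{I}_t = \Phi_t$ by construction, closing the converse.

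The main obstacle lies in the forward direction: the pullback identity alone fixes only the directional derivatives of $A$ and $F$ along $\widetilde{Z}$, leaving both the first component of $\Theta$ and its outer-parameter dependence a priori unconstrained. To force the diagonal chain rule to collapse onto $(\star)$, one must appeal to the time-preserving, outer-parameter-independent structure of $\Theta$ produced by the Lie-transform construction of Appendix~1; in the converse this normalization is imposed by hand through the explicit lift.
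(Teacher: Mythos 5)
Your reduction of (\ref{bigproblem}) to the forward relation $(\star)$, $\partial_t\Phi_t(x)+D\Phi_t(x)Z(x)=Y(t,\Phi_t(x))$, is correct, and your converse --- lifting a solution $\Phi$ of (\ref{bigproblem}) to the outer-parameter-independent map $\Theta(s,\xi)=(s,\Phi_s(\xi))$, whose block-triangular Jacobian reduces the pullback identity componentwise to $1=1$ and $(\star)$ --- is precisely the ``basic calculus plus the definitions'' argument the paper leaves unwritten (one quibble: membership of this lift in $\mathcal{P}^{\infty}_{d}(\widetilde{\Omega})$ needs joint smoothness of $(s,\xi)\mapsto\Phi_s(\xi)$, which the class $\mathcal{P}^{\infty}_{d}(\Omega)$ does not literally guarantee).

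The genuine gap is in your forward direction. The proposition quantifies over an \emph{arbitrary} $\Theta\in\mathcal{P}^{\infty}_{d}(\widetilde{\Omega})$ satisfying $\Theta_t^{-1}\ast\widetilde{Y}=\widetilde{Z}$, whereas your proof uses the extra normalizations $A(t,s,\xi)=s$ and $\partial_tF\equiv0$, which are not hypotheses of the statement; appealing to ``the construction of Appendix 1'' is circular here (Proposition \ref{suspensiontool} is what later licenses using that construction, not the reverse), and it is not even literally accurate, since the generator $\widetilde{G}=[1,G]^{T}$ of (\ref{tauserieG}) has first component $1$ (so the flows $\varphi_{\tau,t}$ translate rather than fix the first coordinate) and the coefficients $G_n(t,\xi)$ do depend on the outer parameter $t$. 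Your diagnosis that the bare pullback hypothesis is insufficient is nevertheless right: take $\Omega=\mathbb{R}^{N}$, $Y\equiv Z\equiv 0$ and $\Theta_t(s,x)=(s,\,x+\sin(2\pi t/T)\,e_1)$ with $e_1$ the first coordinate vector; then $\Theta\in\mathcal{P}^{\infty}_{d}(\widetilde{\Omega})$ and $\Theta_t^{-1}\ast\widetilde{Y}=\widetilde{Z}$ for every $t$, yet $\Phi_t(x)=x+\sin(2\pi t/T)\,e_1$ violates (\ref{bigproblem}). So the implication cannot be proved as stated; it holds exactly for the normalized class in which, on the diagonal $s=t$, the first component of $\Theta_t(t,x)$ equals $t$ (mod $T$) and $\partial_t\bigl(\pi\circ\Theta_t(s,x)\bigr)\big|_{s=t}=0$ --- the structure your converse lift exhibits. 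The constructive fix is to state that normalization as an explicit hypothesis (or restrict to $t$-independent, time-fiber-preserving $\Theta$) and then run your diagonal chain-rule computation, rather than to import the property informally from Appendix 1.
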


\begin{proof}
The proof relies on basic calculus and the definitions listed
above.
\end{proof}

We illustrate now the strategy used to solve equation
(\ref{bigproblem}) for
$\widetilde{Z}=\widetilde{\overline{Y}}:=\left[1,\overline{Y}
\right]^{T}$ with $\overline{Y}$ being the classical first averaged
system, namely $\frac{1}{T}\int_0^{T} Y ds$. The same reasoning will
be used for higher-order averaged systems in \S \ref{higherorder}.
By Proposition \ref{suspensiontool}, the problem of finding a
solution of (\ref{bigproblem}) falls into the Lie transform
formalism if $\widetilde{Y}$ and $\widetilde{\overline{Y}}$ are
given by $\tau$-series (cf. Appendix 1 and Theorem 7.1). This point
of view, according to the solvability of Lie's equations, will then
allow us to solve formally (\ref{bigproblem}) in \S
\ref{solbigprob}.

Starting from that point, denote $\widetilde{\overline{Y}}$ by
$\widetilde{Y}_{ave}$. In order to write $\widetilde{Y}_{ave}$ and
$\widetilde{Y}$ as $\tau$-series we introduce a natural auxiliary
real parameter $\tau$,  by the following expressions
\begin{align}  \label{f14b}
\widetilde{Y}_{ave,\tau}&:= \left[ 1_{\mathbb{R}},
0_{\mathbb{R}^{N}} \right]^{T} +\tau \cdot \left[ 0_{\mathbb{R}},
\overline{Y} \right]^{T},
\\  \label{f15b}
\widetilde{Y}_{\tau}&:= \left[ 1_{\mathbb{R}}, 0_{\mathbb{R}^{N}}
\right]^{T} +\tau \cdot \left[ 0_{\mathbb{R}}, Y
\right]^{T}=\widetilde{Y}_{0}^{(0)}+\tau \cdot
\widetilde{Y}_{1}^{(0)},
\end{align}
with the obvious definitions for the vector fields
$\widetilde{Y}_{0}^{(0)}$ and $\widetilde{Y}_{1}^{(0)}$, and
\begin{equation}\label{formuledonnees}
\widetilde{Y}_{k}^{(0)}=0_{\mathbb{R}^{N+1}}, \mbox{ for all
integers }\,k\geq2.
\end{equation}

Consider $\varphi_{\tau,t}$ the {\em semiflow at $\tau$} generated
by a $\tau$-suspended  smooth field $\widetilde{G}_t$ for each
non-negative $t$, {\it i.e.} $\varphi_{\tau,t}$ is the solution at
$\tau$ of
\begin{equation}  \label{f16}
S_{\widetilde{G}_t}\left\{
\begin{array}{c}
\frac{d\widehat{\xi}}{d\tau
}=\widetilde{G}_t(\widehat{\xi})=\widetilde{G}(t,\widehat{\xi}) \\
\end{array}
\right. ,\widehat{ \xi }=(\tau ,\xi )\in \mathbb{R}^{N+1},
\end{equation}
with a formal  expansion of $\widetilde{G}(t,\widehat{\xi})$ in
powers of $\tau$ given by
\begin{equation}\label{tauserieG}
\widetilde{G}(t,\widehat{\xi})=\left[ 1, G(t,\widehat{\xi})
\right]^{T}=\sum_{n\geq 0}\frac{\tau^n}{n!}\widetilde{G}_{n}(t,\xi
),
\end{equation}where
\begin{equation}\label{Gi}
\widetilde{G}_{0}(t,\xi)=\left[ 1, G_{0}(t,\xi) \right]^{T} \mbox{
and for } n\in \mathbb{Z}^{*}_{+} \,\mbox{,} \;
\widetilde{G}_{n}(t,\xi)=\left[ 0, G_{n}(t,\xi) \right]^{T},
\end{equation}
with   $G$ and $G_n$  smooth vector fields on $\mathbb{R}^{N}$.

\begin{Rmk} The generator system would be denoted by another notation, the
$\tau$-suspension and the $t$-suspension being not equivalent
generally. We slightly abuse the notation. \end{Rmk}

Applying the discussion of the Appendix 1 with $p=N+1$,
$A_{\tau}=\widetilde{Y}_{\tau}$ and
$B_{\tau}=\widetilde{Y}_{ave,\tau}$ for all $\tau$, and utilizing
the fact that  Lie's equations are independent of $\tau$, we obtain
that if there exists $\varphi_{\tau,t}$ such that
$(\varphi_{\tau,t})^{\ast
}\widetilde{Y}_{\tau}=\widetilde{Y}_{ave,\tau}$ for every
non-negative real $t$, then by Corollary 7.2 with
$H_{\tau,t}=G_{\tau,t}$, Lie's equations

\begin{equation}
\left\{
\begin{array}{l}
\widetilde{Y}_{0}^{(0)}=\left[ 1_{\mathbb{R}}, 0_{\mathbb{R}^{N}}
\right]^{T},  \\
\widetilde{Y}_{0}^{(1)}=\left[ 0_{\mathbb{R}}, \overline{Y}
\right]^{T},
\\
\widetilde{Y}_{0}^{(m)}=\left[ 0_{\mathbb{R}^{N+1}}\right] \mbox{
for all } m\in \mathbb{Z}_{+}\backslash\{0,1\},
\end{array}
\right.   \label{f17}
\end{equation}
are solvable for the $G_{n,t}$.

Note that $\widetilde{Y}$ (resp. $\widetilde{Y}_{ave})$ corresponds
to $\tau=1$ in (\ref{f15b}) (resp. (\ref{f14b})), so if
$(\varphi_{1,t})^{\ast}\widetilde{Y}=\widetilde{Y}_{ave}$ for every
non-negative real $t$, then by Proposition \ref{suspensiontool}, the
one-parameter family $(\pi\circ \varphi_{1,t} \circ
\mathcal{I}_t)_{t\in \mathbb{R}^{+}}$ solves the equation
(\ref{bigproblem}). We solve Lie's equations (\ref{f17}) in the next
section and we will show how this solvability will allow us to
compute formally a solution of (\ref{bigproblem}) in \S 2.3.

\subsection{Solving  Lie's equations associated to the equivalence problem (\ref{bigproblem})-
 first-order averaged system}
\label{SolvingLie}
 We treat in this section the problem of
solvability of Lie's equations associated with (\ref{bigproblem})
for $\widetilde{Z}=\widetilde{Y}_{ave}$.

In that respect, let  $M$ be  the map from
$\mathbb{Z}_{+}^{*}\times\mathbb{Z}_{+}^{*}$ to the set of the
vector fields of $\mathbb{R}^{N+1}$ defined by
\begin{equation}\label{defM}
\left\{
\begin{array}{l}
M(j+k,j)=\widetilde{Y}_{k}^{(j)},\,\forall \,(k,j)\, \in
\mathbb{Z}_{+}\times \mathbb{Z}_{+}^{*},\\
M(i,j)=0_{\mathbb{R}^{N+1}} \qquad i<j,
\end{array}
\right.
\end{equation}
where the terms $\widetilde{Y}_{k}^{(j)}$ are calculated from those
of (\ref{f15b}) and (\ref{formuledonnees}) by the recursive formula
(\ref{f10}).

We have the following result:
\begin{Prop}\label{propzero} For every integer $j$ greater than
or equal to two  and for every integer $l$ belonging to
$\{0,..,j-2\}$, we have $M(j,j-l)=0.$
\end{Prop}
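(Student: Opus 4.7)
\medskip
\noindent\textbf{Proof plan.} The plan is to reformulate the claim uniformly and run a single induction on the ``lower'' index of the Deprit triangle.

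\emph{Reduction.} By the definition (\ref{defM}), $M(j,j-l)=\widetilde{Y}_{l}^{(j-l)}$, and the ranges $j\ge 2$, $l\in\{0,\ldots,j-2\}$ amount exactly to $l\ge 0$ and $j-l\ge 2$. Setting $n=l$ and $p=j-l$, the proposition is therefore equivalent to
\begin{equation*}
\widetilde{Y}_{n}^{(p)}=0\qquad\text{for all } n\ge 0 \text{ and all } p\ge 2.
\end{equation*}
I shall prove this by induction on $n$.

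\emph{Base case $n=0$.} This is precisely Lie's equations (\ref{f17}), which impose $\widetilde{Y}_{0}^{(p)}=0$ for every integer $p\ge 2$.

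\emph{Inductive step.} Fix $n\ge 1$ and $p\ge 2$, and assume $\widetilde{Y}_{m}^{(q)}=0$ for every $0\le m<n$ and every $q\ge 2$. The recursive formula (\ref{f10}) is triangular: schematically it relates $\widetilde{Y}_{n-1}^{(p+1)}$ to $\widetilde{Y}_{n}^{(p)}$ together with a linear combination of brackets $[\widetilde{G}_{\,\cdot\,},\widetilde{Y}_{m}^{(p)}]$ with $m\le n-1$. Solving this identity for $\widetilde{Y}_{n}^{(p)}$ yields an expression in the quantities $\widetilde{Y}_{n-1}^{(p+1)}$ and $\{\widetilde{Y}_{m}^{(p)}\}_{0\le m\le n-1}$. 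By the inductive hypothesis, $\widetilde{Y}_{n-1}^{(p+1)}=0$ (applied with $m=n-1<n$ at the level $p+1\ge 3\ge 2$), and each $\widetilde{Y}_{m}^{(p)}$ with $m<n$ vanishes as well, so every bracket on the right-hand side vanishes. Hence $\widetilde{Y}_{n}^{(p)}=0$, closing the induction.

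\emph{Where the difficulty lies.} The substance of the proposition is a bookkeeping consequence of two ingredients: the very sparse initial row (\ref{formuledonnees}) that starts the Deprit triangle, and Lie's equations (\ref{f17}), which kill the entire $n=0$ column for $p\ge 2$. The only subtle point is to read (\ref{f10}) in the direction that makes the induction work, namely expressing $\widetilde{Y}_{n}^{(p)}$ in terms of quantities with strictly smaller lower index (at a possibly larger upper index), rather than in terms of $\widetilde{Y}_{n}^{(p-1)}$; once this rearrangement of the recursion is made explicit, the induction on $n$ is immediate.
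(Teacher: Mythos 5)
Your proof is correct and is in substance the paper's own argument: both rest on exactly the same instance of the recursion (\ref{f10}) together with Lie's equations (\ref{f17}), which annihilate the row $\widetilde{Y}_{0}^{(m)}$ for $m\geq 2$. The only difference is bookkeeping — you run a strong induction on the lower index $n$ uniformly in the order $p\geq 2$, solving (\ref{f10}) for $\widetilde{Y}_{n}^{(p)}$, whereas the paper inducts on $j=n+p$ along anti-diagonals of the Deprit triangle, deducing $M(j,j-l)=M(j,j-l-1)$ down to $M(j,j)=0$; your packaging is arguably the cleaner of the two.
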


\begin{proof}We prove this fact by recurrence. First we note that
$M(2,2)=\widetilde{Y}_{0}^{(2)}=0$ by (\ref{f17}).
 If $j=3$, then the recurrence formula (\ref{f10}) applied for $n=0$
and $i=2$ for the vector field $\widetilde{Y}$ and re-written for
$M$, gives $M(3,2)=M(3,3)-L_{\widetilde{G}_0}M(2,2)$ but
$M(3,3)=M(2,2)=0$ because of (\ref{f17}), so $M(3,2)=0$.

We make the following hypothesis, $\mathcal{P}(j)$, until a $j>3$,
$$\mathcal{P}(j):\,M(j-1,(j-1)-l)=0,
 \quad \mbox{for all }\, l \in \{0,..,j-3\}.$$
Then, as a consequence of this predicate, we get
$$ \forall \, p \in \{0,..,l\};\,M((j-1)-l+p,(j-1)-l)=0.$$

We show the heredity of the predicate. The formula (\ref{f10}) for
$n=l$ and $i=j-(l+1)$, takes the form
$$\widetilde{Y}_{l}^{(j-l)}
=\widetilde{Y}_{l+1}^{(j-(l+1))}+\sum_{p=0}^{p=l}C_l^{p}L_{\widetilde{G}_{l-p}}\widetilde{Y}_{p}^{(j-(l+1))},$$
so is re-written, for $M$,
$$M(j,j-l)=M(j,j-(l+1))+\sum_{p=0}^{p=l}C_l^{p}L_{\widetilde{G}_{l-p}}M(j-(l+1)+p,j-(l+1)),$$
and, because of the predicate,
\begin{equation}\label{propazero}
M(j,j-l)=M(j,j-1-l)\,\mbox{ for all }\, l \in \{0,..,j-3\} .
\end{equation}
But for $l=0$, $M(j,j)=\widetilde{Y}_{0}^{(j)}=0$ according to
(\ref{f17}), so $M(j,j-l)=0$ for every integer $l$ between $0$ and
$j-3$. Making $l=j-3$ in (\ref{propazero}) we get $M(j,2)=0$ and
therefore $\mathcal{P}(j+1)$.
\end{proof}

We make a useful remark for the clarity of some coming computations.

\begin{Rmk} Let $Y$ be the vector field associated with the system (1.1),
with an expanded $t$-suspended form given by (2.9) and (2.10). Let
$G$ be a vector field on $\mathbb{R}^N$ with $\widetilde{G}$ given
on the model of (2.12) and (2.13).  Let $\widetilde{Y}^{(m)}_j$ be
the terms calculated by the recursive formula (7.5) with
$A=\widetilde{Y}$ and $H=\widetilde{G}$. Then $\pi\circ
\widetilde{Y}^{(m)}_j=Y^{(m)}_j$ for every
$(m,j)\in\mathbb{Z}_{+}\times \mathbb{Z}_{+}$. \end{Rmk}

This last proposition allows us to solve Lie's equations easily in
our case, which we can summarize in the following
\begin{Prop}\label{solveLie}
Suppose that $\varphi_{1,t}$ generated by $\widetilde{G}_t$ given by
(\ref{tauserieG}), satisfies $(\varphi_{1,t})^{\ast}
\widetilde{Y}=\widetilde{Y}_{ave}$ for all $t\geq 0$. Then the
sequence $(G_j,\widetilde{Y}_j^{(1)}=\left[\alpha_{j}^{(1)},
Y_{j}^{(1)}\right]^{T})_{j \in \mathbb{Z}_{+}^{*}}$ (with
$\alpha_{j}^{(1)} \in \mathbb{R}$) is entirely determined by the
sequence $(\tilde{Y}_j^{(0)})_{j \in \mathbb{Z}_{+}^{*}}$ given by
(\ref{formuledonnees}) and (\ref{f15b}).

More exactly we have, for every positive integer $j$,
\begin{equation}\label{formuleGj}
\alpha_{j}^{(1)}=0 \mbox{,
}G_{j}(t)=\int_{0}^{t}(jL_{G_{j-1}}Y-Y_j^{(1)})ds\,,\,\forall \,t\,
\in \mathbb{R}^{+}
\end{equation}
{\em modulo} a constant vector, and,
\begin{equation}\label{formuleY1j}
\widetilde{Y}_{j}^{(1)}=-\sum_{k=0}^{k=j-1}C_{j-1}^{k}L_{\widetilde{G}_{k}}\widetilde{Y}_{j-1-k}^{(1)}.
\end{equation}
The initial term $(G_0,\widetilde{Y}_0^{(1)})$ is defined by the
first Lie equation and (\ref{f17}).
\end{Prop}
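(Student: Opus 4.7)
The plan is to apply the Deprit-type recursion (written out in the proof of Proposition \ref{propzero}),
\[
\widetilde{Y}_n^{(i+1)} = \widetilde{Y}_{n+1}^{(i)} + \sum_{p=0}^{n} C_n^p\, L_{\widetilde{G}_{n-p}} \widetilde{Y}_p^{(i)},
\]
at two well-chosen index pairs $(n,i)$ and then to read off components in the time/spatial decomposition of $\mathbb{R}^{N+1}$.

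For (\ref{formuleY1j}), I would take $(n,i) = (j-1, 1)$. Proposition \ref{propzero}, applied to $M(j+1, j-1) = \widetilde{Y}_{j-1}^{(2)}$, tells us that the left-hand side of the recursion vanishes for every $j \geq 1$ (since $j+1 \geq 2$ and the second index $j-1$ lies in $\{0,\dots,j-1\}$). Solving for $\widetilde{Y}_j^{(1)}$ and re-indexing $k = j-1-p$ (using $C_{j-1}^{j-1-k} = C_{j-1}^k$) yields (\ref{formuleY1j}) directly; note that this expression does not yet involve $G_j$, only the $\widetilde{G}_0,\dots,\widetilde{G}_{j-1}$ and the $\widetilde{Y}_k^{(1)}$ for $k < j$.

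To obtain (\ref{formuleGj}) together with $\alpha_j^{(1)} = 0$, I would instead apply the recursion at $(n,i) = (j,0)$. By (\ref{formuledonnees}) we have $\widetilde{Y}_k^{(0)} = 0$ for every $k \geq 2$, so only $p = 0$ and $p = 1$ survive in the sum and the leading $\widetilde{Y}_{j+1}^{(0)}$ term drops for $j \geq 1$, leaving
\[
\widetilde{Y}_j^{(1)} = L_{\widetilde{G}_j} \widetilde{Y}_0^{(0)} + j\, L_{\widetilde{G}_{j-1}} \widetilde{Y}_1^{(0)}.
\]
A direct componentwise computation of each Lie bracket, using $\widetilde{Y}_0^{(0)} = [1, 0]^T$, $\widetilde{Y}_1^{(0)} = [0, Y]^T$ and the shape of $\widetilde{G}_k$ prescribed in (\ref{Gi}), shows that both brackets have vanishing first component---which gives $\alpha_j^{(1)} = 0$---while their spatial components combine to $-\partial_t G_j + j\, L_{G_{j-1}} Y$. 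Identifying this with $Y_j^{(1)}$ and integrating in $t$ from $0$ delivers (\ref{formuleGj}) modulo a constant of integration.

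The base case $(G_0, \widetilde{Y}_0^{(1)})$ is read off from the first Lie equation of (\ref{f17}): it specifies $\widetilde{Y}_0^{(1)} = [0, \overline{Y}]^T$, and applying the recursion at $(n,i) = (0,0)$ turns this into $[0, \overline{Y}]^T = \widetilde{Y}_1^{(0)} + L_{\widetilde{G}_0} \widetilde{Y}_0^{(0)} = [0, Y - \partial_t G_0]^T$, whence $G_0(t) = \int_0^t (Y - \overline{Y})\,ds$ modulo a constant vector. The main technical care lies not in any global structural step but in the componentwise bookkeeping of the Lie brackets on $\mathbb{R}^{N+1}$---tracking how the nonzero time slot of $\widetilde{G}_0$ contributes differently from the purely spatial $\widetilde{G}_k$ for $k \geq 1$, so that the first slot of $\widetilde{Y}_j^{(1)}$ really does cancel. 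Once Proposition \ref{propzero} has done the heavy lifting of annihilating the higher columns of Deprit's triangle, the remainder of the argument is essentially algebraic.
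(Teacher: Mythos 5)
Your argument is essentially the paper's own proof: the paper obtains (\ref{formuleGj}) and $\alpha_j^{(1)}=0$ from the recursion (\ref{f10}) at $(n,i)=(j,0)$ using (\ref{formuledonnees}), and (\ref{formuleY1j}) from the recursion at $(n,i)=(j-1,1)$ together with the vanishing of $\widetilde{Y}^{(2)}_{j-1}$ supplied by Proposition \ref{propzero}, merely packaging these two steps (plus the base case from the first Lie equation) inside an explicit induction $\mathcal{H}(j)$, which your direct reading of the recursion renders unnecessary. The only blemish is notational: with the convention $M(j+k,j)=\widetilde{Y}^{(j)}_{k}$, the entry you invoke is $M(j+1,2)=\widetilde{Y}^{(2)}_{j-1}$ (Proposition \ref{propzero} with first index $j+1$ and shift $l=j-1$), not $M(j+1,j-1)$; the vanishing conclusion you draw from it is nevertheless correct.
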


\begin{proof} First we prove that $\widetilde{G}_{0}$ (and
 $G_0$) is determined as follows.

By definition of the Lie derivative in $\mathbb{R}^{N}$, the first
Lie equation (cf. Appendix 1) takes the form
\begin{equation}  \label{f18}
\widetilde{Y}_{0}^{(1)}=L_{\widetilde{G}_{0}}\widetilde{Y}_{0}^{(0)}+\widetilde{Y}_{1}^{(0)}=D\widetilde{Y}_{0}^{(0)}.\widetilde{G}_{0}-D\widetilde{G}_{0}.\widetilde{Y}_{0}^{(0)}+\widetilde{Y}_{1}^{(0)},
\end{equation}
where $\widetilde{G}_{0}$ is the unknown. Since
$D\widetilde{Y}_{0}^{(0)}=\left[ 0_{\mathbb{R}^{(N+1)\times
(N+1)}}\right]$, (\ref{f18}) becomes
\begin{equation}  \label{f19}
\widetilde{Y}_{0}^{(1)}-\widetilde{Y}_{1}^{(0)}=-D\widetilde{G}_{0}.\widetilde{Y}_{0}^{(0)}.
\end{equation}
where
\begin{equation}  \label{f20}
D\widetilde{G}_{0}.\widetilde{Y}_{0}^{(0)}=\left[ 0_{\mathbb{R}}
,\partial _{t}G_{0} \right]^{T}.
\end{equation}

Hence, the equation (\ref{f18}) for $G_{0}$ becomes, taking into
account the expressions of $\widetilde{Y}_{1}^{(0)}$ and
$\widetilde{Y}_{0}^{(1)}$ given by (\ref{f15b}) and (\ref{f17}),
\begin{equation}  \label{f21}
\partial _{t}G_{0}(t,\xi )=Y(t,\xi )-\overline{Y(}\xi ).
\end{equation}
which gives $G_0$ by integration.

We obtain $\widetilde{Y}_1^{(1)}$ as follows. By (\ref{f10}) we get
$$ \widetilde{Y}_0^{(2)}=\widetilde{Y}_1^{(1)}+L_{\widetilde{G}_0}
\widetilde{Y}_{0}^{(1)},$$ but $\widetilde{Y}_{0}^{(1)}$ is given in
(\ref{f17}) and $\widetilde{Y}_0^{(2)}$ is equal to zero. Thus we
get $\widetilde{Y}_1^{(1)}=-L_{\widetilde{G}_0}
\widetilde{Y}_{0}^{(1)}$, which yields (\ref{formuleY1j}) for $j=1$.

Computing $\widetilde{G}_1$ by $(\ref{f10})$ gives:
$$
\widetilde{Y}^{(1)}_1 =
\widetilde{Y}^{(0)}_2+L_{\widetilde{G}_1}\widetilde{Y}^{(0)}_0+L_{\widetilde{G}_0}\widetilde{Y}^{(0)}_1,
$$
and, after easy manipulations as
$\widetilde{Y}^{(0)}_2=0_{\mathbb{R}^{N+1}}$ by
(\ref{formuledonnees}),
\begin{align*}
\alpha_{1}^{(1)}&=0,
\\
\partial_t G_1&= L_{G_0} (Y+\overline{Y}),
\end{align*}
yielding (\ref{formuleGj}) for $j=1$ after integration.

We consider the following predicate, $\mathcal{H}(j)$, for $j\geq
1$:

\begin{equation*}\mathcal{H}(j)\mbox { : } \left\{
\begin{array}{l}

\alpha_{j}^{(1)}=0 \mbox{,
}G_{j}(t)=\int_{0}^{t}(jL_{G_{j-1}}Y-Y_j^{(1)})ds\,,\,\forall \,t\,
\in \mathbb{R}^{+}, \\\mbox{ and }\\
\widetilde{Y}_{j}^{(1)}=-\sum_{k=0}^{k=j-1}C_{j-1}^{k}L_{\widetilde{G}_{k}}\widetilde{Y}_{j-1-k}^{(1)}.
\end{array}\right.
\end{equation*}

We prove $\mathcal{H}(j)$ by recurrence. The preceding computations
show $\mathcal{H}(1)$. Let $j \geq 2$. Suppose $\mathcal{H}(j-1)$.
Again (\ref{f10}) gives,
\begin{equation}\label{eqGj}
\widetilde{Y}_j^{(1)}=\widetilde{Y}_{j+1}^{(0)}+L_{\widetilde{G}_{j}}\widetilde{Y}_{0}^{(0)}+\sum_{k=1}^{k=j}C_j^{k}L_{\widetilde{G}_{j-k}}\widetilde{Y}_{k}^{(0)}
\end{equation}
and, as previously, we have
\begin{equation}\label{dtGj}
L_{\widetilde{G}_{j}}\widetilde{Y}_{0}^{(0)}=\left[0_{\mathbb{R}},-\partial_{t}G_{j}
\right]^{T}.
\end{equation}

The terms $\widetilde{Y}_k^{(0)}$  are equal to zero for all $k \geq
2$ (see (\ref{formuledonnees})), so, using (2.12), (2.13),
(\ref{eqGj}), and $Y_1^{(0)}=Y$, we get by integration the two first
assertions of $\mathcal{H}(j)$, noting that the first component of
the r.h.s of (\ref{eqGj}) is equal to zero (that gives
$\alpha_j^{(1)}=0$).

Finally, we obtain the last assertion of $\mathcal{H}(j)$ as
follows. From (\ref{f10}), we have for every positive integer $j$:
\begin{equation}\label{Ytilde2j}
\widetilde{Y}_{j-1}^{(2)}=\widetilde{Y}_{j}^{(1)}+\sum_{k=0}^{k=j-1}C_{j-1}^{k}L_{\widetilde{G}_{k}}\widetilde{Y}_{j-1-k}^{(1)},
\end{equation}
but by Proposition \ref{propzero} we get in particular that
$M(2+j-1,2)=\widetilde{Y}_{j-1}^{(2)}=0$ for every positive integer
$j$, so we deduce the last assertion of $\mathcal{H}(j)$ from
(\ref{Ytilde2j}). Thus $\mathcal{H}(j)$ is a necessary condition of
$\mathcal{H}(j-1)$, that completes the demonstration. \end{proof}

\subsection{Formal solution of the equivalence problem (\ref{bigproblem})}\label{solbigprob}
We describe in this section a procedure for obtaining formal series
representation of a solution of (\ref{bigproblem}), assuming that
Lie's equations are solved. In other words,  we assume, for this
section, that Lie's equations associated with the problem of
equivalence under consideration are solved.

Introduce for any sufficiently smooth vector fields
$F,H:\,(\tau,\xi)\in \mathbb{R}^{N+1}\rightarrow \mathbb{R}^{N}$,
\begin{equation}\label{deflambda}
\Lambda_{H} F=\frac{\partial F}{\partial \tau}+D_{\xi}F\cdot H,
\end{equation}
where  $D_{\xi}F$ stands for the usual Jacobian of $F$ in the local
coordinates $\xi\in \mathbb{R}^{N}$.

Naturally,  $\Lambda_{H}^{n}$ represents the iterated operation
$$\Lambda_{H}^{n}=\underbrace{\Lambda_{H} \circ .....\circ
\Lambda_{H}}_{\mbox{n}}.$$

The following lemma is the first step toward efficiently describing
$\mathcal{P}^{\infty}_{diff}$--corres\-pondence between a solution
$x$ of (\ref{generalsystem}), and one $z$ of $\dot{y}=Z(y)$.
\begin{Lem}\label{diffrepres}
 Let $\phi_{1,t}$ generated by the $\tau$-suspended vector
field $\widetilde{H}_t=(1,H_t)^{T}$ such that
$(\phi_{1,t})^{\ast}\widetilde{Y}=\widetilde{Z}$ for all $t\geq 0$,
then $\pi\circ\phi_{1,t}\circ\mathcal{I}_t$ is given as follows
\begin{equation}\label{correxbartox}
\pi\circ\phi_{1,t}\circ\mathcal{I}_t=Id_{\mathbb{R}^{N}}\!+\!
H_{0,t}+\sum_{i\geq 1}\frac{1}{(i\!+\!1)!} \left.
\Lambda_{H_t}^{i}H_t \right|_{\tau=0}, \mbox{ for all t
}\in\mathbb{R}^{+}.
\end{equation}
\end{Lem}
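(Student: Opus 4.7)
The plan is to realize $\pi\circ\phi_{1,t}\circ\mathcal{I}_{t}(x)$ as the spatial component of the autonomous flow of the $\tau$-suspended field $\widetilde{H}_{t}=(1,H_{t})^{T}$ on $\mathbb{R}^{N+1}$, evaluated at flow-time $\tau=1$ and starting from the lift of $x$ with $\tau$-coordinate $0$ (this is the natural reading of the $\tau$-suspension, compatible with the notational abuse flagged in the remark preceding (\ref{f17})). Writing the trajectory as $\hat\xi(\tau)=(\tau,\xi(\tau))$, the first-coordinate ODE is trivial because the top row of $\widetilde{H}_t$ is identically $1$, so $\xi$ satisfies the IVP $d\xi/d\tau=H_t(\tau,\xi)$, $\xi(0)=x$, and the lemma reduces to a formal Taylor expansion of $\xi(1)$ about $\tau=0$.

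The core step is to establish by induction on $k\geq 1$ the identity
\begin{equation*}
\xi^{(k)}(\tau)=(\Lambda_{H_t}^{k-1}H_t)(\tau,\xi(\tau)).
\end{equation*}
The base case $k=1$ is just the ODE itself, and for the inductive step, differentiating once more in $\tau$ and invoking the chain rule together with $\xi'=H_t$ yields $\partial_{\tau}(\Lambda_{H_t}^{k-1}H_t)+D_{\xi}(\Lambda_{H_t}^{k-1}H_t)\cdot H_{t}$, which by definition (\ref{deflambda}) equals $\Lambda_{H_t}^{k}H_{t}$.

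Substituting $\xi^{(k)}(0)=(\Lambda_{H_t}^{k-1}H_t)|_{\tau=0}(x)$ into the formal Taylor series $\xi(1)=x+\sum_{k\geq 1}\tfrac{1}{k!}\xi^{(k)}(0)$, extracting the $k=1$ term as $H_t(0,x)=H_{0,t}(x)$ in the notation of (\ref{Gi}), and reindexing the remaining sum by $i=k-1\geq 1$ gives exactly (\ref{correxbartox}). The computation itself is essentially routine; the only mildly delicate point is the reconciliation of the $t$-suspended framework used to define $\mathcal{I}_t$ and $\pi$ with the $\tau$-suspended generator $\widetilde{H}_t$. Once this identification is made, formula (\ref{correxbartox}) is simply the Lie-series representation of an autonomous flow, and no convergence issue arises at the formal level at which the lemma is stated.
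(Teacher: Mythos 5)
Your argument is correct and is essentially the paper's own proof: both rest on the formal Taylor expansion of the $\tau$-flow at $\tau=0$ together with the inductive chain-rule identity showing that the $k$-th $\tau$-derivative of the spatial component is $\Lambda_{H_t}^{k-1}H_t$ evaluated along the trajectory. The only cosmetic difference is that the paper runs this induction component-wise through pullbacks of functions (the operator $\Upsilon_{H_t}$, then reassembled into $\Lambda_{H_t}$ via (\ref{lienope})), whereas you differentiate directly along a single solution curve of $d\xi/d\tau=H_t(\tau,\xi)$; the content, including your handling of the $t$- versus $\tau$-suspension abuse of notation, matches the paper's.
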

\begin{proof}
This lemma is a consequence of the Taylor formula. Let $z$ be
an integral curve of $\dot{y}=Z(y)$, and $x$ the solution of
(\ref{generalsystem}) being  in
$\mathcal{P}^{\infty}_{diff}$-correspondence  with  $z$ by
$(\pi\circ\phi_{1,t}\circ\mathcal{I}_t)_{t\in \mathbb{R}^{+}}$. We
have formally for $\tau=1$:
\begin{equation}  \label{formuletaylor}
(\phi_{1,t}\circ\mathcal{I}_t)(z(t))=\phi
(1,t,(t,z(t)))=(t,z(t))^T+\left. \frac{d\phi }{d\tau }\right|
_{\zeta}+\sum_{n\geq 1}\frac{1}{(n+1)!} \left. \frac{ d^{n+1}\phi
}{d\tau ^{n+1}}\right|_{\zeta}
\end{equation}
where $\zeta$ means the triplet $(\tau=0,t,z(t))$.

Consider the projection $\pi$ on the $N$ last components (projection
which commutes with the operator $\frac{d}{d\tau}$). If we compose
(\ref{formuletaylor}) to the left by $\pi$
 we obtain:
\begin{equation}
\begin{split}
(\pi \circ\phi_{1,t}\circ\mathcal{I}_t)(z(t))&=(\pi
\circ\phi_{1,t}) (t, z(t))\\
&=z(t)+\left. \frac{ d(\pi\circ\phi)}{d\tau}\right| _{\zeta}
+\sum_{n\geq 1}\frac{1}{(n\!+\!1)!} \left. \frac{
d^{n+1}(\pi\circ\phi)}{d\tau ^{n+1}}\right|_{\zeta}
\end{split}
\end{equation}
 taking into account that $\left.\frac{ d}{d\tau } (\pi\circ\phi)\right| _{\zeta}=H_0(t,z(t))$,
 it is therefore sufficient to prove for every integer $n$ greater or
equal than $1$ that:
\begin{equation}\label{formulereculambda}
 \left.
\frac{ d^{n+1}(\pi\circ\phi) }{d\tau ^{n+1}}\right|
_{\zeta}=\left.\Lambda_{H_t}^{n}H_t(\tau,\xi)\right|_{\zeta}.
\end{equation}

We prove (\ref{formulereculambda}). Using the chain rule it is
merely an exercise in basic calculus.

By assumptions,
\begin{equation}\label{phieq}
\frac{ d\phi_{\tau,t} }{d\tau }=\widetilde{H}_{t,\tau} \circ
\phi_{\tau,t}.
\end{equation}

Let us introduce the obvious notation
$\phi_{\tau,t}=(Id_{\mathbb{R}};\phi_{\tau,t}^{1};...;
\phi_{\tau,t}^{N})^{T}$ and,  denoting the $i^{th}$ component of
$H_{t,\tau}$ by $H_{t,\tau}^{i}$, the notation
$\widetilde{H}_{t,\tau}=(1,H_{t,\tau}^{1},...,H_{t,\tau}^{N})^{T}$.

Moreover, let us recall that the classical pullback by a
diffeomorphism $\Phi$ of a map $f: \mathbb{R}^{N} \to \mathbb{R}$ is
defined by:
\begin{equation}\label{pullfct}
\Phi^{\ast}f=f\circ \Phi,
\end{equation}
thus (\ref{phieq}) can be viewed component-wise as:
\begin{equation}\label{pullphi}
\frac{ d\phi_{\tau,t}^{i}}{d\tau }=
(\phi_{\tau,t})^{\ast}H_{t,\tau}^{i},\mbox{ for all } i \in \lbrace
1,...,N\rbrace.
\end{equation}

Noting that
$(\phi_{\tau,t})^{\ast}H_{t,\tau}^{i}(\cdot)=H^i_{t,\tau}\circ
\phi_{\tau,t}(\cdot)=H_t^i(\tau, \phi_{\tau,t}(\cdot))$, by
application of the chain rule, taking into account (\ref{pullfct}),
we obtain finally the key formula:
\begin{equation}\label{Keyformula2}
\frac{d}{d\tau}((\phi_{\tau,t})^{\ast}H_{t,\tau}^{i})=(\phi_{\tau,t})^{\ast}(\partial_{\tau}H_{t,\tau}^{i}
+\langle\nabla H_{t,\tau}^{i}, H_{t,\tau}\rangle),
\end{equation}
where $\nabla $ stands for the gradient  and
$\langle\cdot,\cdot\rangle$ for the usual Euclidean scalar product
on $\mathbb{R}^{N}$.

By introducing the following operator, which acts on families of
maps from $\mathbb{R}^{N}$ to $\mathbb{R}$:

\begin{equation}
\Upsilon_{H_t}(\cdot)=\partial_{\tau}(\cdot) +
\langle\nabla(\cdot),H_t\rangle,
\end{equation}
we can write (\ref{Keyformula2}) in the form:
\begin{equation}
\frac{d}{d\tau}((\phi_{\tau,t})^{\ast}H_{t,\tau}^{i})=(\phi_{\tau,t})^{\ast}\Upsilon_{H_{t,\tau}}(H_{t,\tau}^{i}),
\end{equation}
and thus, by induction it is clear that, for every integer $n$,
\begin{equation}
\frac{d^{n}}{d\tau^{n}}((\phi_{\tau,t})^{\ast}H_{t,\tau}^{i})=
(\phi_{\tau,t})^{\ast}\Upsilon_{H_{t,\tau}}^{n}(H_{t,\tau}^{i}),
\end{equation}
which, according to (\ref{pullphi}), leads to
\begin{equation}
\frac{d^{n+1}}{d\tau^{n+1}}(\phi_{\tau,t}^{i})=(\phi_{\tau,t})^{\ast}\Upsilon_{H_{t,\tau}}^{n}(H_{t,\tau}^{i}),
\end{equation}
and taking this expression at $\tau=0$ we get, because
$\phi_{0,t}=Id_{\mathbb{R}^{N+1}}$:
\begin{equation}\label{coeffcomp}
\frac{d^{n+1}}{d\tau^{n+1}}(
\phi_{\tau,t}^{i})\mid_{\tau=0}=\Upsilon_{H_{t,\tau}}^{n}(H_{t,\tau}^{i})\mid_{\tau=0},
\mbox{ for all }i \in \lbrace1,\cdots,N\rbrace.
\end{equation}

Let $F$ be a map from $\mathbb{R}^{N+1}$ to $\mathbb{R}^{N}$. By
definitions (\ref{pullfct}) and (\ref{deflambda}) we see that
\begin{equation}\label{lienope}
\Lambda_{H_t}F=(\Upsilon_{H_t}F^{1},\cdots,\Upsilon_{H_t}F^{N})^{T},
\end{equation}
so if we apply (\ref{lienope}) to $F=H_t$, taking into account
(\ref{coeffcomp}), we obtain:
\begin{equation}
\frac{d^{n+1}}{d\tau^{n+1}}(\pi \circ
\phi_{\tau,t})\mid_{\tau=0}=\Lambda_{H_t}^{n}H_t\mid_{\tau=0},
\end{equation}
which is (\ref{formulereculambda}).

\end{proof}

Consider now $H_{t}$ and $F_{t}$ time-dependent vector fields acting
on $\mathbb{R}^{N}$, both admitting $\tau$-series representations.
Omitting the dependence on $t$ for the vector fields in the
expansion of $F_{t}$ and $H_{t}$, we consider the following
recurrence formula:
\begin{equation}\label{recu3}
F_{n}^{[i+1]}=F_{n+1}^{[i]}+\sum_{k=0}^{n}C_{n}^{k}D_{\xi}F_{k}^{[i]}
\cdot H_{n-k};\mbox{ for all }i \in \mathbb{Z}_{+},
\end{equation}
initialized by $F_{n}^{[0]}=F_{n,t}$, $ F_{n,t}$ being the $n^{th}$
term of the $\tau$-series of $F_{t}$; and where $H_{n-k}$ is the
$(n-k)^{th}$ term in the $\tau$-series of $H_{t}$.

With these notations, we formulate, \vspace{1ex}

 \noindent{\bf Definition
2.8.}\label{defHtrans} {\em Let $t\in \mathbb{R}^{+}$ fixed. The
$H_{t}$-transformation evaluated at $\tau$, denoted by
$\mathcal{T}_{H_{t}}(\tau)$, of a time-dependent vector field
$F_{t}$ acting on $\mathbb{R}^{N}$, by a time-dependent vector field
$H_{t}$ acting on the same space, both admitting formal
$\tau$-series, is given by:
\begin{equation}\label{Ttrans}
\mathcal{T}_{H_{t}}(\tau)\cdot
F_{t}=\sum_{n\geq0}\frac{\tau^{n+1}}{(n+1)!}F_{0,t}^{[n]},
\end{equation}
where each $F_{0,t}^{[n]}$ is calculated  using the recurrence
formula (\ref{recu3}), initialized by $F_{n,t}^{[0]}=F_{n,t}$; $
F_{n,t}$ being the $n^{th}$ term of the formal $\tau$-series of
$F_{t}$. When $F=H$, the term $F_{0,t}^{[n]}$ will be called the
$(n\!+\!1)^{th}$-corrector.}
\vspace{1ex}

 \begin{Rmk} It is important to note here
that $\mathcal{T}_{H_{t}}(0)\cdot F_{t}=0$, allowing near-identity
transformations for small $\tau$ (cf. Theorem \ref{diffeorepre}).
\end{Rmk}

 We then have the following theorem:
\begin{Thm}\label{diffeorepre}
If $\phi_{\tau,t}$ is generated by a time-dependent vector field of
the form $(1,H_{t})^{T}$ on a region $\mathbb{R}^{+}\times\Omega$,
for which $H_t$  has a formal $\tau$-series, $\sum_{n\geq
0}\frac{\tau^{n}}{n!}H_{n,t}$, on this region for all $\tau\in I$;
$I$ being an interval of reals such that $0\in I$, then:
\begin{equation}\label{soloderepre}
\pi\circ\phi_{\tau,t}\circ\mathcal{I}_t=Id_{\mathbb{R}^{N}}+
\mathcal{T}_{H_{t}}(\tau)\cdot H_{t}, \mbox{ on }  I\times
\mathbb{R}^{+}\times\Omega.
\end{equation}
\end{Thm}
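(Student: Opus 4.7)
The plan is to promote the Taylor-expansion argument used in the proof of Lemma \ref{diffrepres} from the special value $\tau=1$ to an arbitrary $\tau\in I$, and then identify the resulting Taylor coefficients with those produced by the recurrence (\ref{recu3}) via a direct generating-series computation on the operator $\Lambda_{H_t}$.

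First, I would Taylor expand $\phi_{\tau,t}\circ\mathcal{I}_t$ at $\tau=0$. Since $\phi_{0,t}=Id_{\mathbb{R}^{N+1}}$, composing with $\pi$ yields
\begin{equation*}
\pi\circ\phi_{\tau,t}\circ\mathcal{I}_t \;=\; Id_{\mathbb{R}^{N}} \;+\; \sum_{n\geq 0} \frac{\tau^{n+1}}{(n+1)!}\,\left.\frac{d^{n+1}(\pi\circ\phi_{\tau,t})}{d\tau^{n+1}}\right|_{\tau=0}\!\circ\,\mathcal{I}_t,
\end{equation*}
valid on $I\times\mathbb{R}^{+}\times\Omega$ by the assumed regularity of the $\tau$-series for $H_t$. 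Next I would carry over unchanged the chain-rule calculation performed in the proof of Lemma \ref{diffrepres}: iterating (\ref{Keyformula2}) and using (\ref{lienope}) with $\phi_{0,t}=Id$ gives, for every $n\geq 0$,
\begin{equation*}
\left.\frac{d^{n+1}(\pi\circ\phi_{\tau,t})}{d\tau^{n+1}}\right|_{\tau=0} \;=\; \Lambda_{H_t}^{\,n} H_t\big|_{\tau=0}.
\end{equation*}
Nothing in that derivation depends on specializing $\tau$ to $1$; it is the evaluation at $\tau=0$ after differentiating that matters, so the identity transfers verbatim.

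The crux of the proof is then a bookkeeping step identifying $\Lambda_{H_t}^{\,n} H_t|_{\tau=0}$ with $H_{0,t}^{[n]}$. Writing any sufficiently regular $F_t$ and $H_t$ as $\tau$-series $F_t=\sum_{k}\frac{\tau^k}{k!}F_{k,t}^{[i]}$ and $H_t=\sum_{k}\frac{\tau^k}{k!}H_{k,t}$, a shift of indices under $\partial_\tau$ combined with the Cauchy product under $D_\xi F_t\cdot H_t$ gives
\begin{equation*}
\Lambda_{H_t} F_t \;=\; \sum_{n\geq 0}\frac{\tau^n}{n!}\Bigl(F_{n+1,t}^{[i]} + \sum_{k=0}^{n}C_n^{k}\,D_\xi F_{k,t}^{[i]}\cdot H_{n-k,t}\Bigr),
\end{equation*}
whose bracketed $n$-th coefficient is precisely $F_{n,t}^{[i+1]}$ in the sense of (\ref{recu3}). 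Specializing $F_t=H_t$ (so that $F_{k,t}^{[0]}=H_{k,t}$) and iterating $n$ times, evaluation at $\tau=0$ isolates exactly $H_{0,t}^{[n]}$. Substituting into the Taylor expansion of the first step and recognising the result as $\mathcal{T}_{H_t}(\tau)\cdot H_t$ via Definition \ref{defHtrans} yields (\ref{soloderepre}).

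The main obstacle is the dual role of $\tau$: it is both the evolution parameter of the semiflow $\phi_{\tau,t}$ and the formal variable in which $H_t$ is expanded. The third step above is designed exactly to reconcile the two, but one must verify carefully that the factorials from the Taylor expansion and the binomial coefficients arising from the Cauchy product align so that $\Lambda_{H_t}$ encodes precisely the combinatorics of (\ref{recu3}). Once this alignment is made explicit, the identification $\Lambda_{H_t}^{\,n} H_t|_{\tau=0}=H_{0,t}^{[n]}$ follows by a straightforward induction on $n$, and the theorem is obtained.
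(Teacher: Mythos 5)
Your proposal is correct and follows essentially the same route as the paper: generalize the Taylor-expansion/chain-rule derivation of Lemma \ref{diffrepres} from $\tau=1$ to arbitrary $\tau\in I$, then show by the index shift under $\partial_\tau$ and the Cauchy product under $D_\xi(\cdot)\cdot H_t$ that the $n$-th $\tau$-series coefficient of $\Lambda_{H_t}^{i}H_t$ obeys precisely the recurrence (\ref{recu3}), so that evaluation at $\tau=0$ yields $H_{0,t}^{[n]}$ and Definition 2.8 gives (\ref{soloderepre}). Your explicit generating-series verification simply spells out the step the paper attributes to the procedure of Henrard--Roels, so the two arguments coincide.
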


\begin{proof}
First of all, note that the derivations in the proof of Lemma
\ref{diffrepres} show that, for all $t \in\mathbb{R}^{+}$ and
$\tau\in I$,
\begin{equation}\label{1step}
\pi\circ\phi_{\tau,t}\circ\mathcal{I}_t=Id_{\mathbb{R}^{N}}+\tau\cdot
H_{0,t}+\sum_{i\geq 1}\frac{\tau^{i+1}}{(i+1)!} \left.
\Lambda_{H_t}^{i}H_t \right|_{\tau=0}.
\end{equation}
Omitting the time-dependence, simple calculus shows
\begin{equation}
\Lambda_{H}H =\sum_{n\geq 0}\frac{\tau ^{n}}{n!}(H_{n+1}+
\sum_{k=0}^{n}C_{n}^{k}D_{\xi}H_{k}\cdot H_{n-k}),
\end{equation}
and defining the vector fields $H_{n}^{[1]}$ as
\begin{equation}\label{initrecu2}
H_{n}^{[1]}=H_{n+1}+ \sum_{k=0}^{n}C_{n}^{k}D_{\xi}H_{k}\cdot
H_{n-k},
\end{equation}
following a procedure similar to the one of \cite{her}, but for the
operator $\Lambda_{H}$, we get the recurrence formula:

\begin{equation}  \label{recu2}
H_{n}^{[i+1]}=H_{n+1}^{[i]}+\sum_{k=0}^{n}C_{n}^{k}D_{\xi}H_{k}^{[i]}\cdot
H_{n-k};\mbox{ for all }i \in \mathbb{Z}_{+},
\end{equation}
where $H_{n}^{[i]}$ is the n$^{th}$ term of the series
\begin{equation}  \label{LambdaiG}
\Lambda_{H}^{i} H=\sum_{n\geq 0}\frac{\tau ^{n}}{n!}H_{n}^{[i]},
\end{equation}
and the initial terms are given by $H_{n}^{[0]}=H_n$ for every
 non-negative integer $n$.

We conclude therefore that:
\begin{equation}\label{icorrector}
\Lambda_{H}^{i} H\mid_{\tau=0}=H_{0}^{[i]},
\end{equation}
which proves the theorem after taking into account Definition
2.8.\end{proof}

In practice Theorem \ref{diffeorepre} and Corollary 7.2, permit to
determine formally a solution of (\ref{bigproblem}). For example,
when the ``target" autonomous field is $\overline{Y}$, as shows
Proposition \ref{solveLie}, Lie's equations (\ref{f17}) are
solvable, thus $(Id_{\mathbb{R}^{N}}+ \mathcal{T}_{G_{t}}(1)\cdot
G_{t})_{t\in\mathbb{R}^{+}}$ can be computed using computer algebra
implementation \cite{b,ko2,ra} and thereby gives a formal solution
of (\ref{bigproblem}) (with $Z=\overline{Y}$), where the generator
$G$ is determined by (\ref{formuleGj}) in Proposition
\ref{solveLie}.

\section{New higher-order averaged systems and
related corrections}\label{higherorder}

Higher-order averaged systems are linked to the way change of
variables are performed. In the $\epsilon$-dependent case, the usual
formulae are obtained by using formulae for arbitrary-order
derivatives of compositions of differentiable vector functions
\cite{fra}, on the one hand, and by using the implicit function
theorem, on the other
 \cite{esd, mur, per}. This form for higher-order averaging is not the only
 one, as it is pointed out in \cite[pp. 36--37]{lo}. We describe
 here other higher-order averaged systems both for
 $\epsilon$-dependent and independent cases, via Lie transforms formalism, but different
 from those of \cite{na,jap}, and given by simple explicit
 formulae.

\subsection{Formal algorithm of higher-order averaging  based on Lie
transforms}\label{Falgo} We describe here a way of computing
formally the higher-order averaged systems according to the Lie
transform of the vector field $\widetilde{Y} \in
\mathcal{P}^{\infty}(\widetilde{\Omega})$ in consideration.

In that respect we consider first the parametric standard form
$\dot{x}=\tau Y(t,x)$, where $\tau$ is a real parameter. Then we
have the following proposition.
\begin{Prop}\label{nthaveraging}
Let $n$ be an integer greater than or equal to two. Let
$\overline{Y}^{(n)} \in\mathcal{C}^{\infty}(\Omega)$ be the $n^{th}$
term of the finite sequence satisfying
\begin{description}
\item [\textrm{$(a_{\tau})$}]
$\overline{Y}^{(i+1)}=\overline{Y}^{(i)}+\tau^{i+1}U_{i+1} $, where
$U_{i+1}$ is an autonomous field in $\mathcal{C}^{\infty}(\Omega)$
for all $i\in \{0,\cdots,n-1\}$, with $\overline{Y}^{(0)}=0$,
$U_1=\overline{Y}$ and $U_0=0$.

Assume that there exists $\phi_{\tau,t}$ generated by
$\widetilde{W}_t=[1,W_t]^{T}$ having a $\tau$-series representation
such that

\item [\textrm{$(b_{\tau})$}] For all $i\in \{1,\cdots,n-1\},$ the
$i^{th}$ term $W_i$ of the $\tau$-series representation of $W_t$ is
$T$-periodic.
\end{description}
If $\pi\circ\phi_{\tau,t}\circ\mathcal{I}_t$ satisfies for all
$t\in\mathbb{R}^{+}$
\begin{equation}\label{lienmoyneps}
(\pi\circ\phi_{\tau,t}\circ\mathcal{I}_t)^{-1} \ast(\tau Y_t) +
\partial_t(\pi\circ\phi_{\tau,t}\circ\mathcal{I}_t)^{-1}\circ(\pi\circ\phi_{\tau,t}\circ\mathcal{I}_t)=
\overline{Y}^{(n)},
\end{equation}
then
\begin{equation}\label{genenthaveragedsyst}
\overline{Y}^{(n)}=\tau
\overline{Y}+\sum_{m=2}^{n}\frac{\tau^m}{m!}Y_{0}^{(m)},
\end{equation}
where for $m \in \{2,\cdots,n\}$
\begin{equation}\label{eltn}
m!U_m=Y_{0}^{(m)}=\frac{1}{T}\int_{0}^{T}(\sum_{l=0}^{m-2}(\sum_{k=0}^{k=l}C_{l}^{k}L_{W_{l-k}}Y_{k}^{(m-1-l)})+
C_{m-1}^{1}L_{W_{m-2}}Y)ds;
\end{equation} and $W_0=G_0$, with $G_0$ given from (\ref{f21}).
\end{Prop}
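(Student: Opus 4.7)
The plan is to recast~(\ref{lienmoyneps}) in pullback form on the suspended space via Proposition~\ref{suspensiontool}, then expand both sides as formal $\tau$-series and read off Lie's equations order by order in $\tau$. Substituting $\tau Y$ and $\overline{Y}^{(n)}$ for $Y$ and $Z$ in Proposition~\ref{suspensiontool}, (\ref{lienmoyneps}) is equivalent to
\[
(\phi_{\tau,t})^{\ast}\,\widetilde{Y}_{\tau}\;=\;\widetilde{\overline{Y}^{(n)}}_{\tau}\qquad(t\ge 0),
\]
where $\widetilde{Y}_{\tau}=\widetilde{Y}_{0}^{(0)}+\tau\widetilde{Y}_{1}^{(0)}$ as in~(\ref{f15b})--(\ref{formuledonnees}) and $\widetilde{\overline{Y}^{(n)}}_{\tau}=[1,0]^{T}+\sum_{m=1}^{n}\tau^{m}[0,U_m]^{T}$. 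Matching $\tau^m$-coefficients via Corollary~7.2 (with $H_{\tau,t}=W_{\tau,t}$) yields the Lie equations
\[
\widetilde{Y}_{0}^{(m)}\;=\;m!\,[0,U_m]^{T},\qquad m\in\{1,\ldots,n\},
\]
the $\widetilde{Y}_{j}^{(i)}$ being produced by the Deprit recursion~(\ref{f10}).

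Next I unfold this recursion. Applying~(\ref{f10}) at each pair $(n,i)=(j,m-1-j)$ for $j=0,\ldots,m-1$ (so $n+i=m-1$) and summing over $j$, all the intermediate terms $\widetilde{Y}_{j+1}^{(m-j-1)}$ telescope, yielding the key identity
\begin{equation}\label{telescope}
\widetilde{Y}_{0}^{(m)}\;=\;\widetilde{Y}_{m}^{(0)}\,+\,\sum_{l=0}^{m-1}\sum_{k=0}^{l}C_{l}^{k}\,L_{\widetilde{W}_{l-k}}\widetilde{Y}_{k}^{(m-l-1)}.
\end{equation}
For $m\ge 2$, $\widetilde{Y}_{m}^{(0)}=0$ by~(\ref{formuledonnees}); in the outermost slice $l=m-1$ of~(\ref{telescope}) only the $k=0$ and $k=1$ summands survive (since $\widetilde{Y}_{k}^{(0)}=0$ for $k\ge 2$). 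These produce the two boundary pieces $L_{\widetilde{W}_{m-1}}\widetilde{Y}_{0}^{(0)}=[0,-\partial_{t}W_{m-1}]^{T}$ --- the unique contribution involving $W_{m-1}$ --- and $C_{m-1}^{1}L_{\widetilde{W}_{m-2}}\widetilde{Y}_{1}^{(0)}$, whose spatial projection recovers the isolated last term $C_{m-1}^{1}L_{W_{m-2}}Y$ of~(\ref{eltn}) up to a $\partial_{t}Y$-remainder when $m=2$.

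Taking the spatial component of~(\ref{telescope}) and equating with $m!\,U_m$ produces
\[
m!\,U_m\;=\;-\partial_t W_{m-1}+\sum_{l=0}^{m-2}\sum_{k=0}^{l}C_{l}^{k}\,L_{W_{l-k}}Y_{k}^{(m-1-l)}+C_{m-1}^{1}L_{W_{m-2}}Y+\mathcal{E}_m,
\]
where $\mathcal{E}_m$ collects the pure $\partial_t$-contributions coming from $L_{\widetilde{W}_{0}}[0,Z]^{T}=[0,\partial_t Z+L_{W_{0}}Z]^{T}$, which occur whenever the generator in a Lie derivative is $\widetilde{W}_{0}=[1,W_{0}]^{T}$. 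Hypothesis~$(b_{\tau})$ ensures that $W_{1},\ldots,W_{n-1}$ are $T$-periodic; Proposition~\ref{solveLie} supplies the same for $W_{0}$; and a routine induction on~(\ref{f10}) shows that every intermediate $Y_{k}^{(i)}$ inherits $T$-periodicity in $t$ from $Y$. Each summand of $\mathcal{E}_m$, as well as $-\partial_{t}W_{m-1}$, is therefore the time-derivative of a $T$-periodic function and has zero mean over a period, so averaging both sides over $[0,T]$ annihilates them, leaving precisely~(\ref{eltn}).

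The main obstacle is the telescoped identity~(\ref{telescope}); once this combinatorial step is in hand, isolating the boundary contributions of the Deprit triangle and eliminating the $\partial_t$-remainders via $T$-periodicity of the $W_i$'s and the intermediate $Y_{k}^{(i)}$'s is mechanical bookkeeping.
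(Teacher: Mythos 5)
Your proposal is correct and follows essentially the same route as the paper: suspension plus Corollary 7.2 to get the Lie equations $m!\,U_m=Y_0^{(m)}$, the same telescoping of the Deprit recursion (the paper's identity (3.10)), isolation of the boundary terms $-\partial_t W_{m-1}$ and $C_{m-1}^{1}L_{W_{m-2}}Y$, and averaging over $[0,T]$ using autonomy of $Y_0^{(m)}$ and $T$-periodicity of $W_{m-1}$ (and the first Lie equation for $W_0=G_0$). The only difference is bookkeeping: you make explicit the $\partial_t$-remainders $\mathcal{E}_m$ coming from the generator $\widetilde{W}_0=[1,W_0]^{T}$ and kill them by $T$-periodicity, whereas the paper absorbs these into its (projected, suspended) Lie-derivative notation, so the two arguments coincide after the period average.
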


\begin{proof} Let an integer $n\geq 2 $. If there exists $\phi_{\tau,t}$
generated by $\widetilde{W}_t=[1_{\mathbb{R}},W_t]^{T}$ having a
$\tau$-series representation on the model of (\ref{tauserieG}) and
(\ref{Gi}), such that for every positive real $t$, the map
$\pi\circ\phi_{\tau,t}\circ \mathcal{I}_{t}$ satisfies
(\ref{lienmoyneps}), then according to Proposition
\ref{suspensiontool} the equivalence problem (\ref{lienmoyneps})
takes the form of the pullback one $\phi_{\tau,t}^{-1}\ast (\tau
\widetilde{Y})=\widetilde{\overline{Y}^{(n)}}$, that gives by
Theorem 7.1
\begin{equation*}
L(\widetilde{W_t})(\tau)\cdot(\tau\widetilde{Y})=\widetilde{\overline{Y}^{(n)}},
\end{equation*}
where $\overline{Y}^{(n)}$ is given by assumption ($a_{\tau}$).

If we applied (7.4) in this case, we get:
\begin{equation*}
\sum_{m=0}^{m=\infty}\frac{\tau^m}{m!}\widetilde{Y}^{(m)}_{0,t}=\sum_{m=0}^{m=n}\tau^m\widetilde{U}_m,
\end{equation*}
according to the assumption $(a_{\tau})$, where for $m\geq1$,
$\widetilde{U}_m$ (resp. $\widetilde{Y}^{(m)}_{0,t}$) is the flat
form of $U_m$ (resp. $Y^{(m)}_{0,t}$), and the $t$-suspended form
for $m=0$.

By projecting onto the  $N$-last components of $\mathbb{R}^{N+1}$,
by taking into account the remark before Proposition 2.6 we get for
every $m\in\{2,\cdots,n\}$,
\begin{equation}\label{formUi}
U_{m}=\frac{1}{m!}Y_{0}^{(m)},
\end{equation}
giving thereby (3.2).

Express the term $Y_{0}^{(m)}$. By (\ref{f10}) we have for
$m\in\{2,\cdots,n\}$:
\begin{equation}\label{source}
Y_{0}^{(m)}=Y_{1}^{(m-1)}+L_{W_0}Y_{0}^{(m-1)},
\end{equation}
expressing $Y_{1}^{(m-1)}$ by (\ref{f10}) we get:

\begin{equation}
Y_{1}^{(m-1)}=Y_{2}^{(m-2)}+\sum_{k=0}^{1}
C_1^kL_{W_{1-k}}Y_{k}^{(m-2)},
\end{equation}
and  continuing the procedure to an arbitrary rank $l \in
\{2,\cdots,m-1\}$:
\begin{equation}\label{brik}
Y_{l}^{(m-l)}=Y_{l+1}^{(m-1-l)}+\sum_{k=0}^{l}
C_l^kL_{W_{l-k}}Y_{k}^{(m-1-l)},
\end{equation}
thus collecting, in (\ref{source}), from $l=1$ until $l=m-1$
successively the terms $Y_{l+1}^{(m-1-l)}$  we get:
\begin{equation}\label{inter}
Y_{0}^{(m)}=Y_m^{(0)}+\sum_{l=0}^{m-1}(\sum_{k=0}^{k=l}C_{l}^{k}L_{W_{l-k}}Y_{k}^{(m-1-l)}).
\end{equation}

Noting that $Y_{1}^{(0)}=\pi\circ\widetilde{Y}^{(0)}_{1}$ is equal
to $Y$ (see (\ref{f15b})) and taking into account
(\ref{formuledonnees}), we get

\begin{equation}
\sum_{k=0}^{m-1}C_{m-1}^{k}L_{W_{m-1-k}}Y_{k}^{(0)}=C_{m-1}^{1}L_{W_{m-2}}Y+L_{W_{m-1}}Y_0^{(0)},
\end{equation}
and noting  moreover that, as in (\ref{dtGj}),
\begin{equation}
L_{W_{m-1}}Y_0^{(0)}=-\partial_t W_{m-1},
\end{equation}
the formula (\ref{inter}) takes the form
\begin{equation}\label{inter2}
Y_{0}^{(m)}=\sum_{l=0}^{m-2}(\sum_{k=0}^{k=l}C_{l}^{k}L_{W_{l-k}}Y_{k}^{(m-1-l)})+
C_{m-1}^{1}L_{W_{m-2}}Y-\partial_t W_{m-1}.
\end{equation}

Integrating (\ref{inter2}) with respect to time, from $0$ to the
minimal period $T$, we obtain according to the fact that
$Y_{0}^{(m)}$ is autonomous by assumption ($a_{\tau}$) and the fact
that $W_{m-1}$ is $T$-periodic by assumption  ($b_{\tau}$), formula
(\ref{eltn}) taking into account (\ref{formUi}).

Finally, noting  that the first Lie equation is the same at
each-order (cf. (\ref{firsteq}) and related discussion), we easily
conclude that $W_0=G_0$, where $G_0$ is given by (\ref{f21}), which
completes the proof of this proposition.\end{proof}

Note that  when we talk about the $n^{th}$ averaged system, in any
cases, we will consider naturally the following system of
differential equations:
\begin{equation}\label{nthaveeq}
\dot{x}=\overline{Y}^{(n)}(x),
\end{equation}
where $\overline{Y}^{(n)}$ will be given by
(\ref{genenthaveragedsyst}) in Proposition \ref{nthaveraging}.

\subsection{Correction of higher-order averaged
systems and solvability of related Lie's equations}\label{corectave}

We adopt here the procedure described in \S \ref{solbigprob} for
correcting higher-order averaged systems.

We consider  the system of differential equations
(\ref{generalsystem}), and for a given $p\geq 2$ a higher-order
averaged form given by $\dot{x}=\overline{Y}^{(p)}(x)$ where
$\overline{Y}^{(p)}$ is given by (\ref{genenthaveragedsyst}) in
Proposition \ref{nthaveraging} for $\tau=1$.

Consider the diffeomorphism $\phi_{1,t}$ generated by
$\widetilde{W}_t=[1,W_t]^{T}$ in Proposition \ref{nthaveraging}.
Then, according to Theorem \ref{diffeorepre}, the transformation
$\pi\circ\phi_{1,t}\circ \mathcal{I}_{t}$ is given formally as the
following formal series representation
\begin{equation}\label{seriedepave}
Id_{\mathbb{R}^{N}}+\mathcal{T}_{W_{t}}(1)\cdot W_{t},
\end{equation}
that allows computer algebra implementation \cite{ra}, and where
$W_t$ depends on the averaged system $\overline{Y}^{(p)}$ (cf. again
Proposition \ref{nthaveraging}).

It is important to note here that for two given averaged systems,
$\overline{Y}^{(p)}$ and $\overline{Y}^{(q)}$, ($p \not=q$), the
corresponding generators,  say $W$ and $V$ for fixing the ideas,
related to each system are not {\em a priori} equal. Therefore,
considering Theorem 2.9 and Definition 2.8, the $W_t$- and
$V_t$-transformation given by analogous formulae to
(\ref{seriedepave}), are also not equal.

For instance for $p=1$ (resp. $p=2$), {\it i.e.} when $W=G$ (resp.
$W=\Gamma$), $G$ (resp. $\Gamma$) being defined as the generator of
time-dependent diffeomorphisms which transform $Y$ into
$\overline{Y}^{(1)}=\overline{Y}$ (resp.
$\overline{Y}^{(2)}=\overline{Y}+Y_0^{(2)})$; the second corrector
in $\mathcal{T}_{G_{t}}(1)\cdot G_{t}$ and in
$\mathcal{T}_{\Gamma_{t}}(1)\cdot \Gamma_{t}$ are respectively
\begin{equation}
G_{0,t}^{[1]}=G_{1,t}+DG_{0,t}.G_{0,t},
\end{equation}
which is given by (\ref{recu3}) with $F=H=G$, $i=0$, $n=0$  and,
\begin{equation}
\Gamma_{0,t}^{[1]}=\Gamma_{1,t}+D\Gamma_{0,t}.\Gamma_{0,t},
\end{equation}
which is  given by (\ref{recu3}) with $F=H=\Gamma$, $i=0$ and $n=0$.

For $p=2$, we have $\Gamma_{0,t}=G_{0,t}$ (see (\ref{firsteq})
below) and, by applying (\ref{f10}) for $i=1$, $n=0$, $A=\widetilde
{Y}$, and $H=\widetilde{\Gamma}=[1_{\mathbb{R}}, \Gamma]^T$, with
some easy computations we get
 \begin{equation}\label{oscpart}
\Gamma_{1,t}=\int_{0}^{t}\left( D(Y+\overline{Y}).\Gamma_{0}-D\Gamma
_{0}.(Y+\overline{Y})-Y_{0}^{(2)}\right) ds,
 \end{equation}
\textit{modulo} a constant, where $Y_{0}^{(2)}$ is given by
(\ref{eltn}) with $m=2$.

Same algebraic procedure for obtaining $\Gamma_{1,t}$ and related
computations, in the case $p=1$, leads to
\begin{equation}\label{calculG1}
G_{1,t}=\int_{0}^{t}\left(
D(Y+\overline{Y}).G_{0}-DG_{0}.(Y+\overline{Y})\right) ds,
\end{equation}
\textit{modulo} a constant.

Therefore, we see that the $G_t$-transformation and the
$\Gamma_t$-transformation are different. Indeed by substituting
(3.17) and (3.16) respectively in (3.14) and (3.15), we note that
the second correctors $G_{0,t}^{[1]}$ and $\Gamma_{0,t}^{[1]}$ are
distinct. Same remarks hold for higher-order correctors related to
two distinct higher-order averaged systems, allowing us to conclude
the non-invariance of every higher-order corrector.

We consider now the invariance of the first corrector. Indeed, the
relation (\ref{f10}) for $i=0$, $n=0$, $A=\widetilde{Y}$ and
$H_0=\widetilde{W}_0$ or $H_0=\widetilde{V_0}$ yields for the same
Lie's equation
\begin{equation}\label{firsteq}
\partial_t(\pi\circ H_{0,t})=Y_t-\overline{Y}
\end{equation}
after projection. This demonstrates that the first corrector term
$W_{0,t}^{[0]}$  is independent from the averaged system considered.

Note that the non-invariance (resp. invariance) of higher-order
(resp. first-order) corrector $W_{0,t}^{[n]}$ ($n\geq 1$) (resp.
$W_{0,t}^{[0]}$), was mentioned in \cite[p. 36]{lo} in the context
of general averaging ({\it i.e.} without the use of Lie transforms).

We conclude this section concerning the solvability of Lie's
equations related to higher-order averaged systems.

Fix $p\geq 2$. For computer implementation of (\ref{seriedepave}),
we have to initialize (\ref{recu3}) with $H=W=F$; in other words we
have to solve Lie's equations associated with (\ref{lienmoyneps}).
Those equations show a structure similar to that studied in \S
\ref{SolvingLie}. We outline how to solve them for $p >2$, and we
give the precise results for the case $p=2$. Here, the key point of
the Lie's equations solvability consists in extending Proposition
2.5 associated to the first-order equivalence problem ({\it i.e.}
for $n=1$ in (3.1)) to one associated to a higher-order equivalence
problem (3.1). Indeed, based on ($\ref{f10}$), by introducing a map
$\mathcal{N}$ on the model of (\ref{defM}) with elements of the
diagonal given by (3.3), we can show that a proposition analogous to
Proposition \ref{propzero} holds, by shifting of ($p-1$) ranges the
first column of vanishing terms in the infinite matricial
representation of $\mathcal{N}$. This allows us to compute the $W_j$
({\it i.e.} the $j^{th}$-term of the $\tau$-series representation of
$W$ in Proposition 3.1), by adapting computations giving the $G_j$
in Proposition \ref{solveLie}. We illustrate this procedure in the
case $p=2$ for the convenience of the reader.

In this latter case, we can indeed show the two propositions below.
First of all note that, by applying $(3.3)$ in Proposition 3.1 with
$m=2$, we have
\begin{equation}\label{secav}
Y_0^{(2)}=\frac{1}{T} \cdot \int_{0}^{T}(D(Y+\overline{Y}).\Gamma
_{0}-D\Gamma _{0}.(Y+\overline{Y}))ds,
\end{equation}
where $\Gamma_0$ is given by integration of the r.h.s of (3.18)
because of the invariance  of the first corrector.

The analogous of Proposition 2.5 associated to the second-order
equivalence problem ({\it i.e.} $n=2$ in (3.1)) is
\begin{Prop}
Let $\mathcal{N}$ be a map introduced on the model of (\ref{defM}),
such that $\mathcal{N}(1,1)=\widetilde{Y}_0^{(1)}$ is given by
(2.14), $\mathcal{N}(2,2)=\widetilde{Y}_0^{(2)}=[0, Y_0^{(2)}]^{T}$
where $Y_0^{(2)}$ is given by (\ref{secav}), and
$\mathcal{N}(j,j)=\widetilde{Y}_0^{(j)}$ are null for every integer
$j$ greater than or equal to three. Then for every such integer $j$,
and for every integer $l$ belonging to $\{0,...,j-3\}$, we have
$\mathcal{N}(j,j-l)=0$.
\end{Prop}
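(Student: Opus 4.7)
The plan is to mirror the induction that proved Proposition~\ref{propzero}, shifting the vanishing strip by one column to accommodate the fact that here $\mathcal{N}(2,2) = \widetilde{Y}_0^{(2)}$ is generally nonzero, whereas $M(2,2)$ vanished in the first-order setting. The obstruction to extending the vanishing all the way to column $2$ will be precisely $\mathcal{N}(2,2)$, which is why the conclusion drops from $l \in \{0,\ldots,j-2\}$ to $l \in \{0,\ldots,j-3\}$.

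I would proceed by strong induction on $j \geq 3$. The base case $j=3$ is immediate: the only admissible value is $l=0$, giving $\mathcal{N}(3,3) = \widetilde{Y}_0^{(3)} = 0$ by hypothesis. For the inductive step I fix $j \geq 4$ and assume the claim holds for all $j' \in \{3,\ldots,j-1\}$; equivalently, each such row $j'$ has $\mathcal{N}(j',c) = 0$ for every column $c \in \{3,\ldots,j'\}$.

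The key tool is the recursive formula~(\ref{f10}) applied with $n=l$, $i = j-(l+1)$, and with generator $\widetilde{W}$ in place of $\widetilde{G}$. Rewritten for $\mathcal{N}$, it reads
\begin{equation*}
\mathcal{N}(j,j-l) \;=\; \mathcal{N}(j,j-l-1) \;+\; \sum_{p=0}^{l} C_l^{p}\,L_{\widetilde{W}_{l-p}}\,\mathcal{N}\bigl((j-l-1)+p,\,j-l-1\bigr).
\end{equation*}
For $l \in \{0,\ldots,j-4\}$, every entry appearing in the sum has column $c = j-l-1 \geq 3$ and row $r = c+p \in \{c,\ldots,j-1\}$, so it lies either on the diagonal (and vanishes by hypothesis since $c \geq 3$) or in a strictly earlier row with column $\geq 3$ (and vanishes by the inductive hypothesis). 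Consequently the sum is zero, giving $\mathcal{N}(j,j-l) = \mathcal{N}(j,j-l-1)$ for every $l \in \{0,1,\ldots,j-4\}$. Chaining these equalities and using $\mathcal{N}(j,j) = 0$ yields $\mathcal{N}(j,j) = \mathcal{N}(j,j-1) = \cdots = \mathcal{N}(j,3) = 0$, which is exactly $\mathcal{N}(j,j-l) = 0$ for every $l \in \{0,\ldots,j-3\}$.

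The only subtle point is the sharpness of the bound on $l$: extending the chain to $l = j-3$ would force the term $\mathcal{N}(2,2) = \widetilde{Y}_0^{(2)}$ into the sum, and since~(\ref{secav}) makes this generally nonzero, the chain of equalities must stop at column $3$. I do not anticipate any serious obstacle; the argument is essentially bookkeeping parallel to Proposition~\ref{propzero}, and the only thing to monitor is that the generator $\widetilde{W}$ of the second-order equivalence problem now replaces $\widetilde{G}$ in~(\ref{f10}), which does not affect the zero/nonzero pattern of the $\widetilde{Y}_k^{(j)}$.
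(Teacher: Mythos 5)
Your proof is correct and follows essentially the same route the paper intends: it is the recurrence argument of Proposition \ref{propzero}, based on (\ref{f10}) rewritten for $\mathcal{N}$, with the strip of vanishing entries shifted by one column because $\mathcal{N}(2,2)=\widetilde{Y}_0^{(2)}$ no longer vanishes — exactly the "shift of the first column of vanishing terms" the paper invokes without writing out the details. Your clean strong induction on the row index, chaining $\mathcal{N}(j,j-l)=\mathcal{N}(j,j-l-1)$ down to column $3$ and stopping there precisely because of $\mathcal{N}(2,2)$, is a faithful and complete realization of that sketched argument.
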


Following the model of the proof of Proposition 2.6 that was based
on Proposition 2.5, we can show, taking into account Proposition
3.2,
\begin{Prop}\label{Liesolves2ndave}
Suppose that $\psi_{1,t}$ generated by
$\widetilde{\Gamma}_t=[1_{\mathbb{R}},\Gamma_t]^T$ satisfies
$\psi_{1,t}^{-1}\ast \widetilde{Y}=\widetilde{\overline{Y}^{(2)}}$
for all $t\geq 0$. Then the sequence
$(\Gamma_j,\widetilde{Y}_j^{(1)},\widetilde{Y}_j^{(2)})_{j \in
\mathbb{Z}_{+}^{*}}$ is entirely determined by the sequence
$(\tilde{Y}_j^{(0)})_{j \in \mathbb{Z}_{+}^{*}}$ given by
(\ref{formuledonnees}) and (2.9). More exactly we have for every
positive integer $j$,
\begin{equation}\label{formulegammaj}
\Gamma_{j}(t)=\int_{0}^{t}(jL_{\Gamma_{j-1}}Y-Y_j^{(1)})ds\,\,,\,\forall
\,t\, \in \mathbb{R}^{+},
\end{equation}
modulo a constant vector,
\begin{equation}\label{formulegammaY1j}
\widetilde{Y}_{j}^{(1)}=\widetilde{Y}_{j-1}^{(2)}-\sum_{k=0}^{k=j-1}C_{j-1}^{k}L_{\widetilde{\Gamma}_{k}}\widetilde{Y}_{j-1-k}^{(1)},
\end{equation}
and
\begin{equation}\label{formulegammaY2j}
\widetilde{Y}_{j}^{(2)}=-\sum_{k=0}^{k=j-1}C_{j-1}^{k}L_{\widetilde{\Gamma}_{k}}\widetilde{Y}_{j-1-k}^{(2)},
\end{equation}
with every first component of $\widetilde{Y}_j^{(1)}$ and
$\widetilde{Y}_j^{(2)}$ being equal to zero. The initial term
$(\Gamma_0,\widetilde{Y}_0^{(1)},\widetilde{Y}_0^{(2)})$ is defined
by the first Lie equation, (\ref{f17}), and (3.19).
\end{Prop}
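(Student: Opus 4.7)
The plan is to imitate the induction of Proposition \ref{solveLie}, but to track three sequences simultaneously instead of two. The decisive input is Proposition 3.2: when reparameterised via $m=j-l$, it asserts that $\widetilde{Y}_l^{(m)}=0$ for every $m\geq 3$ and every $l\geq 0$, a shift by one column compared with Proposition \ref{propzero}. This is precisely why two columns of iterated brackets remain potentially nonzero at second order and why the new source term $\widetilde{Y}_{j-1}^{(2)}$ survives on the right-hand side of (\ref{formulegammaY1j}), in contrast with (\ref{formuleY1j}).

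For the base case $j=1$, the first Lie equation involves only $\widetilde{Y}_0^{(0)}$ and $\widetilde{Y}_0^{(1)}$ — the new datum $\widetilde{Y}_0^{(2)}$ does not enter it — so the argument reproduces (\ref{f18})--(\ref{f21}) verbatim and $\Gamma_0=G_0$, in agreement with the invariance of the first corrector noted after (\ref{firsteq}). Applying (\ref{f10}) with $n=0$, $i=1$, $A=\widetilde{Y}$, $H=\widetilde{\Gamma}$ delivers $\widetilde{Y}_1^{(1)}=\widetilde{Y}_0^{(2)}-L_{\widetilde{\Gamma}_0}\widetilde{Y}_0^{(1)}$, matching (\ref{formulegammaY1j}); applying (\ref{f10}) with $n=0$, $i=2$ and using $\widetilde{Y}_0^{(3)}=0$ from Proposition 3.2 yields $\widetilde{Y}_1^{(2)}=-L_{\widetilde{\Gamma}_0}\widetilde{Y}_0^{(2)}$, matching (\ref{formulegammaY2j}). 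The value $\Gamma_1$ then follows from (\ref{f10}) with $n=1$, $i=0$, after projecting onto the last $N$ components as in (\ref{dtGj}).

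For the inductive step, I assume the three formulas hold through rank $j-1$ and apply (\ref{f10}) three times. With $n=j$, $i=0$, only the terms $k=j$ and $k=j-1$ in the sum survive (because $\widetilde{Y}_k^{(0)}=0$ for $k\geq 2$), giving $\widetilde{Y}_j^{(1)}=L_{\widetilde{\Gamma}_j}\widetilde{Y}_0^{(0)}+jL_{\widetilde{\Gamma}_{j-1}}\widetilde{Y}_1^{(0)}$; the identity $L_{\widetilde{\Gamma}_j}\widetilde{Y}_0^{(0)}=[0,-\partial_t\Gamma_j]^T$ then yields (\ref{formulegammaj}) by integration in $t$. With $n=j-1$, $i=1$, the equation reads $\widetilde{Y}_{j-1}^{(2)}=\widetilde{Y}_j^{(1)}+\sum_{k=0}^{j-1}C_{j-1}^{k}L_{\widetilde{\Gamma}_k}\widetilde{Y}_{j-1-k}^{(1)}$, and rearranging produces (\ref{formulegammaY1j}); the new source $\widetilde{Y}_{j-1}^{(2)}$ is known by the induction hypothesis. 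With $n=j-1$, $i=2$, the equation reads $\widetilde{Y}_{j-1}^{(3)}=\widetilde{Y}_j^{(2)}+\sum_{k=0}^{j-1}C_{j-1}^{k}L_{\widetilde{\Gamma}_k}\widetilde{Y}_{j-1-k}^{(2)}$, and Proposition 3.2 kills the left-hand side, giving (\ref{formulegammaY2j}). The vanishing of the first components of $\widetilde{Y}_j^{(1)}$ and $\widetilde{Y}_j^{(2)}$ propagates through the recursion, since the bracket of a $\tau$-suspended field with a flat field is itself flat, exactly as in the derivation of $\alpha_j^{(1)}=0$ in Proposition \ref{solveLie}.

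The main obstacle I anticipate is purely bookkeeping: one must verify that the index range $\{0,\ldots,j-3\}$ in Proposition 3.2 covers precisely the terms needed, in particular $\widetilde{Y}_{j-1}^{(3)}=\mathcal{N}(j+2,3)=0$ for every $j\geq 1$, and one must keep straight which column of the infinite matrix $\mathcal{N}$ is being consulted at each step. No genuinely new analytic ingredient is required beyond the template of Proposition \ref{solveLie} combined with the one-column shift in the vanishing pattern supplied by Proposition 3.2.
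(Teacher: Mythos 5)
Your proof is correct and follows precisely the route the paper intends: it adapts the induction of Proposition 2.6 verbatim, replacing the vanishing pattern of Proposition 2.5 by the one-column-shifted pattern of Proposition 3.2 (so that $\widetilde{Y}_{j-1}^{(2)}$ survives as a source in (3.22) while $\widetilde{Y}_{j-1}^{(3)}=\mathcal{N}(j+2,3)=0$ closes the recursion (3.23)), which is exactly what the paper sketches when it says the result follows "on the model of the proof of Proposition 2.6, taking into account Proposition 3.2." The index bookkeeping (including the reindexed binomial sums and the flatness argument for the first components) checks out, so no gap remains.
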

This concludes the solvability of Lie's equations related to the
second-order averaged system.

Such propositions can be extended for higher-order averaged systems,
that rules, by induction, the  solvability of related Lie's
equations.

\subsection{A heuristic comparison with the classical
approach}\label{heuristic}

We make in this section a few comments on previous works and ours as
well, and discuss a rigorous setting concerning formulae derived in
the preceding sections, in the case when $\tau$ is a small
parameter, usually denoted by $\epsilon$.

As explained in \S  \ref{corectave}, Lie's equations are solvable to
each order of the averaged systems, giving thereby the time-varying
change of coordinates by Theorem \ref{diffeorepre}. For instance,
for $n=2$ in Proposition \ref{nthaveraging}, we can solve Lie's
equations related to the second-averaged system (cf. Proposition
3.3). Consider the system (1.1), if we make the change of variables
\begin{equation}\label{changeofvariables}
x=y+\epsilon W_{0,t}(y)+ \frac{\epsilon^2}{2}W_{0,t}^{[1]}(y),
\end{equation}
(that is a truncation up to order $2$ of $\mathcal{T}_{W_t}(1)\cdot
W_t$), then following the procedure  of \cite[pp. 168-169]{gh} under
sufficient smoothness assumptions, we obtain that $y$ satisfies
\begin{equation}\label{newvariable}
\dot{y}=\epsilon\overline{Y}+\frac{\epsilon^2}{2}Y_0^{(2)}(y)+O(\epsilon^{3}).
\end{equation}
This shows that our procedure of averaging-correction is coherent
and rigorously justified up to order two, for $\epsilon$
sufficiently small.

Extending this kind of reasoning to order greater than $2$,
following, for example, the procedure described in \cite[\S3]{mur},
we can show that our procedure is rigorously justified to any order,
for $\epsilon$ sufficiently small, making thereby near-identity
transformations. This permits us to obtain convergence theorems on
$n^{th}$-order averaging, with suitable assumptions, such as those
used in \cite{per}, for instance. These kind of results increase
precision but not the time-scale of validity. The latter can be
improved if the system shows some attracting properties, as it is
classically done \cite{lo, mur, sv}.

From a practical point of view, we obtain more manageable formulae
than those usually obtained \cite{esd, per}. Indeed, our approach
based on Proposition \ref{nthaveraging} gives an efficient procedure
of construction of any higher-order averaged systems, a procedure
which can be implemented on a computer using a symbolic
computational software, as it is done for Lie transforms in general
\cite{b, ko2, ra}. Actually, many authors \cite{ch, mur, na} pointed
out the fact that Lie transforms can be used to obtain higher-order
averaged systems, but in our knowledge, a systematic explicit
exposition like the one given here has not been published (although
a related paper \cite{jap} presents some of the elements.) The fact
that the classical formulae are not manageable may be more evident
in \cite{har}, where the authors develop an algorithm of higher
order averaging for linear equations with periodic coefficients, in
order to simplify the one of \cite{esd}.

Finally, it is interesting to note that usually the
$i^{th}$-correctors are thought to be the anti-derivative in time of
the oscillatory part associated to each order (cf. (6) via (7) and
(5) of \cite{per} for example). This structural form is the same for
correctors $W_0^{[i]}$ in our computations, with $Y_0^{(i)}$ given
by (\ref{eltn}) instead of the classical averaged part of order $i$,
as a consequence of  Lie's equations (the reader will be convinced
by comparing (6) of \cite{per} for $i=2$ and (\ref{oscpart}), for
example).

\section{Choice of appropriate initial
data and the $m^{th}$ approximation of the $n^{th}$ type
}\label{seccondinit}

Let $n$ be a positive integer. Consider $\phi_1 \in
\mathcal{P}^{\infty}_{d}(\widetilde{\Omega})$, which satisfies the
hypothesis of Proposition \ref{nthaveraging}. Then, by \S
\ref{fiteq} and Proposition \ref{suspensiontool}, every solution $x$
of $Y$ through $x_0 \in \Omega$ is in
$\mathcal{P}^{\infty}_{diff}$-correspondence by
$(\pi\circ\phi_{1,t}\circ\mathcal{I}_t)_{t\in\mathbb{R}^{+}}$ to a
unique solution $\overline{x}^{(n)}$ of the $n^{th}$ averaged system
given by (\ref{nthaveeq}) in Proposition \ref{nthaveraging}. We
recall that the latter means
\begin{equation}\label{conjug}
x(t,x_0)=(\pi\circ \phi _{1,t}\circ \mathcal{I}_t)
(\overline{x}^{(n)}(t,(\pi\circ\phi_{1,0}^{-1}\circ\mathcal{I}_0)(x_0))),\mbox{
for all } t\in\mathbb{R}^{+}.
\end{equation}

In a number of applications $x_0$ is given, and thus,  in order to
compute an approximation of $x$ based on $\overline{x}^{(n)}$, we
have to determine the initial condition appropriate for
$\mathcal{P}^{\infty}_{diff}$-correspondence, that is
$(\pi\circ\phi_{1,0}^{-1}\circ\mathcal{I}_0)(x_0)$ as shown by
(\ref{conjug}).

This problem of determination of the inverse transformation for a
given Lie transformation is not new, and several algorithms have
been proposed (see e.g. \cite{hen, v}). We present here an approach
based on \cite{he}.

In order to be consistent with our notations used in
(\ref{tauserieG}) and (\ref{Gi}), we consider $K$ (resp. $H$) the
generator of the inverse (resp. direct) transformation given by its
formal $\tau$-series
\begin{equation}\label{K}
\widetilde{K}(t,\tilde{\xi})=\sum_{n \geq
0}\frac{\tau^{n}}{n!}\widetilde{K}_n(t,\xi), \mbox{ (resp.
}\widetilde{H}(t,\tilde{\xi})=\sum_{n \geq
0}\frac{\tau^{n}}{n!}\widetilde{H}_n(t,\xi)).
\end{equation}
In the context of our framework, Proposition $5$ of \cite{he} can be
reformulated as the following
\begin{Prop}\label{inversegenerator}
Let $\phi_{1,t}$ generated by $\widetilde{H}_t$ defined in (\ref{K})
and let $\phi^{-1}_{1,t}$ generated by the $\tau$-suspended vector
field $\widetilde{K}_t$, then:
\begin{equation}
\widetilde{K}_t=-(\phi_{1,t})^{ \ast} \widetilde{H}_t.
\end{equation}
\end{Prop}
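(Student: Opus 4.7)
The plan is to differentiate the composition identity $\phi_{\tau,t}\circ\phi_{\tau,t}^{-1}=\mathrm{Id}_{\mathbb{R}^{N+1}}$ in the flow parameter $\tau$ and read off $\widetilde{K}$ from the resulting pullback formula. This is the classical Henrard trick \cite{he}, transplanted into the $\tau$-suspended, $t$-parametrized framework of the paper; the only nonclassical ingredient is a careful translation between the pointwise identity that emerges and the $\tau$-series statement written in Proposition \ref{inversegenerator}.

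Concretely, I would start from the two defining flow equations of \eqref{f16}, namely $\partial_\tau \phi_{\tau,t} = \widetilde{H}_{t}\circ \phi_{\tau,t}$ and $\partial_\tau \phi_{\tau,t}^{-1} = \widetilde{K}_{t}\circ \phi_{\tau,t}^{-1}$, both with initial condition the identity at $\tau=0$. Applying $\partial_\tau$ to $\phi_{\tau,t}(\phi_{\tau,t}^{-1}(y))=y$ and using the chain rule yields
\[
\widetilde{H}_{t}(\tau,y) + D\phi_{\tau,t}\bigl(\phi_{\tau,t}^{-1}(y)\bigr)\cdot \widetilde{K}_{t}\bigl(\tau,\phi_{\tau,t}^{-1}(y)\bigr) = 0.
\]
Setting $z=\phi_{\tau,t}^{-1}(y)$ and inverting $D\phi_{\tau,t}(z)$ then produces
\[
\widetilde{K}_{t}(\tau,z) = -\bigl(D\phi_{\tau,t}(z)\bigr)^{-1}\cdot \widetilde{H}_{t}\bigl(\tau,\phi_{\tau,t}(z)\bigr) = -(\phi_{\tau,t})^{\ast}\widetilde{H}_{t}(\tau,z),
\]
by the pullback convention recalled in \S \ref{LTS}. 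The final move is to regard both sides as the formal $\tau$-series \eqref{K}: the object $(\phi_{1,t})^{\ast}\widetilde{H}_{t}$ read in the Lie-transform sense is the $\tau$-series whose coefficients are generated by the Appendix~1 recursion, and these match coefficient-by-coefficient the series obtained from the pointwise identity above.

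The main obstacle is notational rather than analytic: reconciling the ``time-$1$'' pullback $(\phi_{1,t})^{\ast}$ that appears in the statement with the time-$\tau$ pullback $(\phi_{\tau,t})^{\ast}$ that falls out of the differentiation. This is handled by the standard Lie-transform identification between the $\tau$-pullback of a $\tau$-series and the action of the associated Lie-series operator, a bridge already used throughout \S \ref{solbigprob} in the proof of Theorem \ref{diffeorepre}. Beyond this bookkeeping, the argument requires nothing more than the chain rule and the smoothness assumed on the generators.
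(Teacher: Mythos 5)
Your proposal is correct and follows essentially the same route as the paper: the paper's proof also differentiates the identity $\phi_{\tau,t}(\eta_{\tau,t}(x))=x$ in $\tau$, applies the chain rule, multiplies by the inverse Jacobian, and recognizes the result as $-(\phi_{\tau,t})^{\ast}\widetilde{H}_t$ via Definition 2.2, concluding by ``substituting $\tau=1$''. Your closing remark on reconciling the $\tau$-level pullback with the $\tau=1$ statement through the Lie-transform series identification is, if anything, slightly more explicit than the paper's own treatment of that point.
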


\begin{proof} We provide a proof for the sake of completeness. Let us denote
the inverse of $\phi_{\tau,t}$ by $\eta_{\tau,t}$, {\it i.e.}, for
all $(\tau, t)\in\mathbb{R}\times\mathbb{R}^{+}$, and $x\in\Omega$,
$\phi_{\tau,t}(\eta_{\tau,t}(x))= {x}$.

Taking the derivative with respect to $\tau$ yields
$$
(D_{x}\phi_{\tau,t})(\eta_{\tau,t}(x))\frac{d \eta_{\tau,t}}{d
\tau}(x) + \frac{\partial\phi_{\tau,t}}{\partial
\tau}(\eta_{\tau,t}(x)) = 0 . \nonumber
$$

We express the second term on the left-hand side in terms of the
generator $\widetilde{H}_t$, and move it to the right-hand side to
obtain
$$
 (D_{x}\phi_{\tau,t})(\eta_{\tau,t}(x))\frac{d
\eta_{\tau,t}}{d
\tau}(x)=-\widetilde{H}_t(\phi_t(\tau,\eta_{\tau,t}(x))),
$$
which gives, after multiplying both sides with the inverse of
$(D_{x}\phi_{\tau,t})(\eta_{\tau,t}(x))$

$$
\frac{d \eta_{\tau,t}}{d\tau}(x) =-D_{x}\phi_{\tau,t}^{-1}(x)\cdot
\widetilde{H}_t(\phi_{\tau,t}(\eta_{\tau,t}(x))).
$$
The latter leads to,
$$
\frac{d \eta_{\tau,t}}{d \tau}(x)
=-D_{x}\phi_{\tau,t}^{-1}(\phi_{\tau,t}(\eta_{\tau,t}(x)))\cdot
\widetilde{H}_t(\phi_{\tau,t}(\eta_{\tau,t}(x))),
$$
that is, with  Definition 2.2, to,
$$
\frac{d \eta_{\tau,t}}{d
\tau}(x)=-(\phi_{\tau,t}^{-1}\ast\widetilde{H}_t)(\eta_{\tau,t}(x)).
$$

Therefore, for each $\tau$ and each $t$, we deduce that
$\eta_{\tau,t}$ is generated by\\
$-(\phi_{\tau,t})^{\ast}\widetilde{H}_t$, and thus substituting
$\tau=1$ the proof is complete.

\end{proof}

The following proposition, which is an obvious corollary of Theorem
\ref{diffeorepre} and Proposition \ref{inversegenerator}, where
$\mathcal{T}_{K_t}(1)$ is given by Definition 2.8, determines
formally the inverse transformation for all $t\geq 0$, and in
particular the one in (\ref{conjug}) for $t=0$.

\begin{Prop}\label{inverserepre}
Let $\phi_{1,t}$ generated by a time-dependent vector field of the
form $\widetilde{H}_{t}=(1,H_t)^{T}$ and let
$\widetilde{K}_t=-(\phi_{1,t})^{\ast}\widetilde{H}_{t}$, then for
every non-negative real $t$:
\begin{equation}\label{formrepreinverse}
\pi\circ\phi_{1,t}^{-1}\circ\mathcal{I}_t=Id_{\mathbb{R}^{N}}+\mathcal{T}_{K_t}(1)\cdot{K_t}.
\end{equation}
\end{Prop}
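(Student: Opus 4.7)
My plan is to combine Proposition \ref{inversegenerator} with Theorem \ref{diffeorepre}: the former identifies the generator of the $\tau$-family of inverse diffeomorphisms, while the latter converts such a generator into an explicit formal series at $\tau=1$. Concretely, Proposition \ref{inversegenerator} gives that $\widetilde{K}_t = -(\phi_{\tau,t})^{\ast}\widetilde{H}_t$ is the $\tau$-suspended generator of the family $\{\phi_{\tau,t}^{-1}\}_\tau$, whose value at $\tau=1$ is the diffeomorphism $\phi_{1,t}^{-1}$ we need to describe. Since $\widetilde{K}_t$ is built from $\widetilde{H}_t$ and $\phi_{\tau,t}$, both admitting $\tau$-series, it inherits a formal $\tau$-series itself, so that $\mathcal{T}_{K_t}(1)\cdot K_t$ is a well-defined formal object via Definition 2.8 and the recurrence (\ref{recu3}).

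The next step is to apply Theorem \ref{diffeorepre} to $\{\phi_{\tau,t}^{-1}\}_\tau$ with $\widetilde{K}_t$ playing the role of $\widetilde{H}_t$. A brief computation shows that the first row of $D\phi_{\tau,t}$ is $(1,0,\ldots,0)$, since the first component of $\phi_{\tau,t}$ simply translates by $\tau$; consequently the first component of $\widetilde{K}_t$ equals $-1$ rather than $+1$. This is the only structural deviation from the hypothesis of Theorem \ref{diffeorepre}. But the proof of Lemma \ref{diffrepres} (on which Theorem \ref{diffeorepre} rests) uses only the ODE for the last $N$ components of the flow together with the action of $\Lambda$ on maps $\mathbb{R}^{N+1}\to\mathbb{R}^N$; the first component of the generator enters the computation nowhere after the projection by $\pi$. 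Therefore the derivation carries over verbatim and yields $\pi\circ\phi_{\tau,t}^{-1}\circ\mathcal{I}_t = Id_{\mathbb{R}^N}+\mathcal{T}_{K_t}(\tau)\cdot K_t$ on a neighborhood of $\tau=0$; specializing to $\tau=1$ gives (\ref{formrepreinverse}).

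The main obstacle is essentially bookkeeping, in line with the author's characterization of the statement as an ``obvious corollary'': one must confirm that the sign discrepancy in the first component is invisible after projection, and that $\widetilde{K}_t$ carries the regularity needed to feed into the machinery of Theorem \ref{diffeorepre}. Neither point presents a genuine difficulty, and the formula (\ref{formrepreinverse}) emerges directly.
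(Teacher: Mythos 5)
Your argument is correct and takes essentially the same route as the paper, which gives no separate proof and simply declares the result an obvious corollary of Theorem \ref{diffeorepre} and Proposition \ref{inversegenerator} --- exactly the combination you carry out. Your extra check that the $-1$ appearing in the first component of $\widetilde{K}_t$ is harmless after projection by $\pi$ (so that the derivation of Lemma \ref{diffrepres} applies verbatim with the $\tau$-series of $K_t$ feeding Definition 2.8) only makes explicit a point the paper leaves implicit through its convention (\ref{K}).
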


\vspace{1ex}

 This proposition allows us to compute the $m^{th}$
approximation of the solution of (\ref{generalsystem}), which is in
$\mathcal{P}^{\infty}_{diff}$-correspondence to one of a $n^{th}$
averaged system by the following algorithm which defines the {\em
$m^{th}$ approximation of the $n^{th}$ type}:

\begin{itemize}
\item[(1).] Compute the $n^{th}$ averaged system given by
(\ref{nthaveeq}) and Proposition \ref{nthaveraging}.

\item[(2).] Solve Lie's equations as explained in \S 3.2 until the order $m$,
      giving the $m$ first $W_{t,i}$'s terms of the $\tau$-series of
$W$.

\item[(3).] The initial condition $x(0)$ of the exact system being
         given, truncate to the order $m$ the $K_0$-transformation (at
time $t=0$) taken at $x(0)$, $\mathcal{T}_{K_0}(1)\cdot
{K_0}(x(0))$, where $K$ is loosely speaking the generator of the
family of inverse transformations, and take this truncation plus
$x(0)$ as initial condition for the $n^{th}$ averaged system.

\item[(4).] Compute numerically the solution $\overline{x}^{(n)}$ of
      the $n^{th}$ averaged
      system, and add to $\overline{x}^{(n)}(t)$ the truncation to the order $m$ of the series
      $\mathcal{T}_{W_t}(1)\cdot {W_t}(\overline{x}^{(n)}(t))$, which gives
finally the {\em $m^{th}$ approximation of the $n^{th}$ type} of the
exact solution at time $t$.
\end{itemize}

\section{Application to a problem in atmospheric
chemistry}\label{numtest} The original problem that motivated this
work was to examine models of diurnal forcing in atmospheric
dynamics and chemistry on long time-scales \cite{det}. The
day-to-night changes in the radiative heating and cooling of the
planetary boundary layer -- the lowest part of the atmosphere (1-2
km) -- are very large. Still, one is often only interested in the
slow, season-to-season or even year-to-year changes in the way this
lower layer interacts with the underlying surface (land or ocean),
on the one hand, and the free atmosphere above, on the other
\cite{det}. The diurnally averaged model we derived here provides
insight into a similar problem, that of the basic chemistry of slow
changes, from one day or week to the next, of a highly simplified
system of photochemically active trace gases in the troposphere
(i.e., the lower 10 km of the atmosphere). The system of two coupled
ODEs we consider in this paper governs the concentration of the
chemical species CO (carbon monoxide) and O3 (ozone) \cite{hem},
\begin{equation}\label{chemicalsystem}
M2\left\{ \begin{array}{l}
\frac{dx_{1}}{dt}=S_{1}(t)-Z_{1}(t)x_{1}x_{2}\\ \noindent
\frac{dx_{2}}{dt}=-S_{2}(t)+Z_{1}(t)x_{2}x_{1}-(1+Z_{1}(t))x_{2}+(Z_{2}(t)+
S_{2}(t))\frac{1}{x_{2}}, \end{array} \right.
\end{equation}
where $t\to x(t)=(x_{1}(t),x_{2}(t))^{T}=([CO](t),[O_{3}](t))^{T}$.
This system belongs to the general class of ODEs systems given by
(\ref{generalsystem}).

In this system, the diurnal forcing is through the functions $S_{i}$
and $Z_{i}$ (see Figure \ref{coeffs} below for a typical example),
which can have rather complicated shapes.  The system
(\ref{chemicalsystem}) is a very simple model for air pollution in
an urban environment.  Changes in the chemistry at the day-night
transitions and those in the emission of CO and O3 in a city during
a day lead to $S_{i}$ and $Z_{i}$ which are only piece-wise smooth.

\begin{figure}
  \centering
\includegraphics*[width=8.5cm]{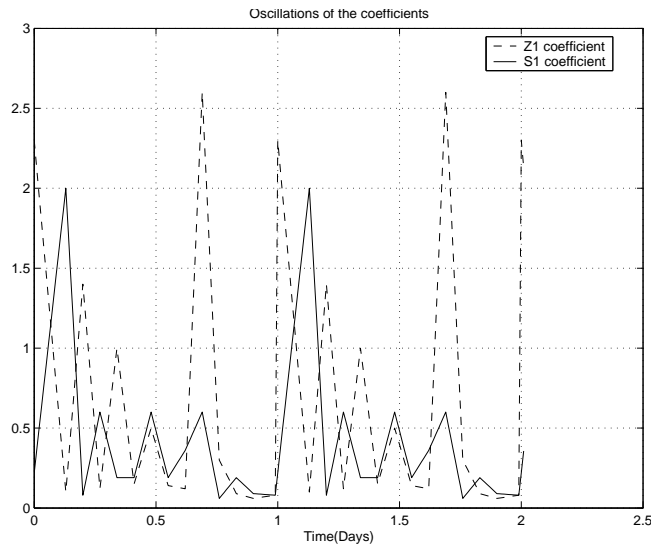}
\caption{\label{coeffs} The figure depicts forcing induced by $S_i$
and $Z_i$.}
 \vspace{0.1cm}
 \center{}
\end{figure}

While it is not difficult to numerically integrate the system as it
is by standard methods, the forcing carried by functions $S_{i}$ and
$Z_{i}$ can have high frequencies and therefore require a time step
too small compared to the total time of simulation, $T_{max}$ (say,
several tens of years). This assumes that the goal is to obtain
sufficiently smooth and therefore realistic numerical solutions.

We discuss in this section how well the numerical solutions of the
averaged systems, at orders one and two, approximate those of the
original one. An important part of the numerical work for this
particular example is to carry out  the transformations between the
solutions of the original and averaged systems developed in \S
\ref{LTS}. This includes the transformation of initial values of \S
\ref{seccondinit}. Overall, the numerical simulations indicate that
the correction performed by Lie transforms to the first order of the
first averaged system $\overline{M2}$ or the second averaged system
$\overline{M2}^{(2)}$ obtained by applying results of \S
\ref{higherorder}, provides a good approximation to the original
one. Furthermore, regularity  of the averaged systems in time allows
one to integrate them by simple methods, such as an Euler or a
Runge--Kutta of order 4 (RK4 in the sequel).

Last but not  least, we can also use larger step sizes for the
averaged systems than for the original one, which is the key to
speed up simulations of long-term dynamical phenomena.

\subsection{First and second averaged systems
analysis}\label{Analysis}

Let $\Omega:=\mathbb{R}^{2}\backslash \{x_2=0\}$.  Then the vector
field $Y$ associated to $M2$ belongs to
$\mathcal{P}^{\infty}(\Omega)$ and
$\overline{Y}\in\mathcal{C}^{\infty}(\Omega)$. Assume that there
exists, for each $t\geq 0$, a diffeomorphism $\phi_{1,t}\in
\mathcal{P}^{\infty}_{d}(\widetilde{\Omega})$, generated by
$\widetilde{G}_t=(1,G_t)^{T}$, such that $(\phi_{1,t})^{\ast}
\widetilde{Y}=\widetilde{Y}_{ave}$.

In order to demonstrate the numerical efficiency of the averaged
systems, we present only an approximation of the family of
diffeomorphisms $(\phi_{1,t})_{t\in\mathbb{R}^{+}}$ up to order one.

For that, we compute the first corrector $G_{0,t}^{[0]}=G_{0,t}$
(see Definition 2.8) given by integration of (\ref{f21}). Define
$G_{0,1}(t,\xi )$ (resp. $G_{0,2}(t,\xi ))$ as the first (resp.
second) component of $G_{0}(t,\xi )$. Then, from
(\ref{chemicalsystem}),
\begin{align}  \label{f23}
G_{0,1}(t,\xi )& = \int_{0}^{t}\delta S_{1}(s)ds-\xi _{1}.\xi
_{2}\int_{0}^{t}\delta Z_{1}(s)ds+C_{0,1}(\xi _{1},\xi _{2}),
\\  \label{f24}
G_{0,2}(t,\xi )& =\left\{
\begin{array}{l}
(\frac{1}{\xi _{2}}-1).\int_{0}^{t}\delta S_{2}(s)ds+(\xi _{1}-1)\xi
_{2}.\int_{0}^{t}\delta Z_{1}(s)ds \\
+\frac{1}{\xi _{2}}\int_{0}^{t}\delta Z_{2}(s)ds+C_{0,2}(\xi
_{1},\xi _{2})
\end{array}
\right\},
\end{align}
where, for $i\in\{1,2\}$,
\begin{equation}\label{magncoeffs}
\delta S_{i}(t)=S_{i}(t)-\overline{S_{i}} \mbox{ and } \delta
Z_{i}(t)=Z_{i}(t)-\overline{Z_{i}}, \mbox{ for all } \quad t \in
\mathbb{R^{+}}.
\end{equation}
The constant parts $C_{0,1}$ and $C_{0,2}$  of (\ref{f23}) and
(\ref{f24}) are given by assuming that for all $\xi\in\Omega$,
$\int_{0}^{T}\int_{0}^{t}G_0(\xi,s)dsdt=0$, which allows us to avoid
``secular" terms at the first order, namely
\begin{equation}
C_{0,1}(\xi
_{1},\xi_{2})=-\int\limits_{0}^{T}\int\limits_{0}^{t}\delta
S_{1}(s)dsdt+\xi
_{1}\xi_{2}\int\limits_{0}^{T}\int\limits_{0}^{t}\delta Z_{1}(s)dsdt
\label{f(38)},
\end{equation}
and from  (\ref{f24}):
\begin{equation}  \label{f(39)}
C_{0,2}(\xi_{1,}\xi_{2})=\left\{
\begin{array}{l}-(\frac{1}{\xi_{2}}-1)\int\limits_{0}^{T}\int\limits_{0}^{t}\delta
S_{2}(s)dsdt-(\xi_{1}-1)\xi
_{2}\int\limits_{0}^{T}\int\limits_{0}^{t}\delta Z_{1}(s)dsdt
\\-\frac{1}{\xi _{2}}\int\limits_{0}^{T}\int\limits_{0}^{t}\delta
Z_{2}(s)dsdt
\end{array}
\right\}.
\end{equation}

Now, let $x(0)$ be the initial condition of a solution $x$ in system
$M2$. In order to compute the first approximation of the first type,
following the procedure described at the end of  \S
\ref{seccondinit}, we have to truncate to order one the
$K_0$-transformation at time $t=0$ of $K_0$ taken at $x(0)$, i.e.
$\mathcal{T}_{K_0}(1)\cdot {K_0}(x(0))$, where $K$ is, loosely
speaking, the generator of the family of inverse transformations.
Thereby, we get  a first approximation of the initial condition
$\overline{x}(0)$ of the solution $\overline{x}$ which is
$\mathcal{P}^{\infty}_{diff}$-correspondent to $x$ by the family
$(\phi_{1,t})_{t\in \mathbb{R}^{+}}$.

So we have by Proposition \ref{inverserepre}, end of \S
\ref{seccondinit} and Theorem 7.1, that the first approximation of
the first type $\overline{x}^{(1,1)}(0)$ of the initial condition
$\overline{x}(0)$ is
\begin{equation}\label{1stappcondinit}
\overline{x}^{(1,1)}(0)=x(0)-G_0(0,x(0)),
\end{equation}
with $G_0(0,x(0))$ determined by the preceding  constants evaluated
at $x(0)$. Note that the first index of the exponent $(1,1)$ in
(\ref{1stappcondinit}) indicates that we deal with the first type of
approximation and note that the second superscript indicates that we
make a truncation up to order one in the series
$\mathcal{T}_{K_0}(1)\cdot {K_0}(x(0))$. An identical convention
will be used in the following when we work with approximations of
higher-order type.

Next we numerically  compute  the solution $z$ of $\overline{M2}$
through $\overline{x}^{(1,1)}(0)$ and then we define the {\em first
pullback} $x^{(1)}$ by truncating to ``order one" the expression
\\ $z(t)+ \mathcal{T}_{G_t}(1)\cdot{G_t}(z(t))$, which gives:
\begin{equation}\label{1stpub}
x^{(1)}(t):=z(t)+G_0(t,z(t)), \mbox{ for all } t\in\mathbb{R}^{+}.
\end{equation}

We describe now the construction of the {\em second pullback}. By
Proposition \ref{nthaveraging} with $n=2$, the second averaged
system $\overline{M2}^{(2)}$ corresponds to the vector field
$\overline{Y}^{(2)}=\overline{Y}+\frac{1}{2}Y_0^{(2)}$ where
$Y_0^{(2)}$ is  determined according to (\ref{eltn}) with $m=2$.

Both direct computation by hand and symbolic manipulation software
for $ Y_{0}^{(2)}=\left[ Y_{0,1}^{(2)}, Y_{0,2}^{(2)} \right]^{T} $,
yield for system (\ref{chemicalsystem}) the following expression as
function of $\xi _{1}$ and $\xi _{2}$, namely,
$$
Y_{0,1}^{(2)}=\frac{1}{T}\int\limits_{0}^{T}\left\{
\begin{array}{l}
(\int\limits_{0}^{t}\delta
S_{1}(s)ds-\int\limits_{0}^{T}\int\limits_{0}^{t}\delta
S_{1}(s)dsdt).(-\xi _{2}.\sigma Z_{1}(t)) \\
+(\int\limits_{0}^{t}\delta
S_{2}(s)ds-\int\limits_{0}^{T}\int\limits_{0}^{t}\delta
S_{2}(s)dsdt).[\xi _{1}.(1-\frac{1}{\xi _{2}} ).\sigma
Z_{1}(t)] \\
+(\int\limits_{0}^{t}\delta
Z_{1}(s)ds-\int\limits_{0}^{T}\int\limits_{0}^{t}\delta
Z_{1}(s)dsdt)\left\{
\begin{array}{l}
[\xi _{2}.\sigma S_{1}(t)\\
+\xi _{1}.(\frac{1}{\xi _{2}}-1)\sigma S_{2}(t)\\
+ \frac{\xi _{1}}{_{\xi _{2}}}\sigma Z_{2}(t)\\
-2.\xi_{1}\xi _{2}]
\end{array}
\right\} \\
+( \int\limits_{0}^{t}\delta
Z_{2}(s)ds-\int\limits_{0}^{T}\int\limits_{0}^{t}\delta
Z_{2}(s)dsdt).(-\frac{\xi _{1}}{_{\xi _{2}}}.\sigma Z_{1}(t))
\end{array}
\right\} dt,
$$
and:
$$
Y_{0,2}^{(2)}=\frac{1}{T}\int\limits_{0}^{T}\left\{
\begin{array}{l}
(\int\limits_{0}^{t}\delta
S_{1}(s)ds-\int\limits_{0}^{T}\int_{0}^{t}\delta S_{1}dsdt).(\xi
_{2}.\sigma Z_{1}(t)) \\
+(\int\limits_{0}^{t}\delta
S_{2}(s)ds-\int\limits_{0}^{T}\int\limits_{0}^{t}\delta
S_{2}dsdt).\left\{
\begin{array}{l}
[\alpha\sigma Z_{1}(t)\\
+ \frac{1}{(\xi _{2})^{2}}\sigma Z_{2}(t)\\
+2(1-\frac{2}{\xi _{2}})\\
+ \beta \sigma S_{2}(t)]
\end{array}
\right\} \\
+(\int\limits_{0}^{t}\delta
Z_{1}(s)ds-\int\limits_{0}^{T}\int\limits_{0}^{t}\delta
Z_{1}dsdt).\left\{
\begin{array}{l}
[-\xi _{2}\sigma S_{1}(t)\\
+\gamma\sigma S_{2}(t)\\
-(\xi _{1}-1)\frac{2}{\xi _{2}}\sigma Z_{2}(t)]
\end{array}
\right\} \\
+(\int\limits_{0}^{t}\delta
Z_{2}(s)ds-\int\limits_{0}^{T}\int\limits_{0}^{t}\delta Z_{2}dsdt).
\left\{
\begin{array}{l}
[-\frac{1}{(\xi _{2})^{2}}\sigma S_{2}(t)\\
+\frac{2}{\xi _{2}}(\xi _{1}-1)\sigma Z_{1}(t)\\
-\frac{4}{\xi_{2}}]
\end{array}
\right\}
\end{array}
\right\} dt,
$$
where $\alpha$ (resp. $\beta$, $\gamma$) are defined by
$\alpha=2\frac{\xi_{1}}{\xi_{2}}-\frac{2}{\xi_{2}}-\xi_{1}+1$ (resp.
$\beta=\frac{2}{(\xi_{2})^{2}}(\frac{1}{\xi_{2}}-1)$, $\gamma=(\xi
_{1}-1)(1-\frac{2}{\xi _{2}})$), $\sigma
Z_{i}(t)=Z_{i}(t)+\overline{Z_{i}}$  and $\sigma
S_{i}(t)=S_{i}(t)+\overline{S_{i}}$ for all $t$. The terms $\delta
Z_i$ and $\delta S_i$ are defined in (\ref{magncoeffs}).

Denote by $\overline{x}^{(2)}(0)$ the initial condition for
$\overline{M2}^{(2)}$ through which the solution
$\overline{x}^{(2)}$ is $\mathcal{P}^{\infty}_{diff}$-correspondent
to $x$. We take as approximation of $\overline{x}^{(2)}(0)$,
following the preceding procedure and notations concerning the first
averaged system and taking into account the invariance of the first
corrector (cf (\ref{firsteq})), the vector $\overline{x}^{(2,1)}(0)$
is given by
\begin{equation}\label{2ndappinit}
\overline{x}^{(2,1)}(0)=x(0)-G_0(0,x(0)),
\end{equation}
i.e. $\overline{x}^{(1,1)}(0)$ defined in (\ref{1stappcondinit}).

Then we numerically compute the solution $v$ of
$\overline{M2}^{(2)}$ based on $\overline{x}^{(2,1)}(0)$, and define
the {\em second pullback} $x^{(2)}$ as the first approximation of
the second type, i.e.:
\begin{equation}\label{2ndpub}
x^{(2)}(t):=v(t)+G_0(t,v(t)), \mbox{ for all } t\in \mathbb{R}^{+}.
 \end{equation}

The choice of computing only the first approximation, based on the
first and second averaged systems, allows us to compare the
approximations of the original system $M2$ by these averaged forms,
the corrector  being the same in each case, but applied to different
solutions. This will be discussed in \S  \ref{Accu}.

The numerical experiments we present here correspond to choices of
the forcing functions ($S_{i},Z_{i}$) which produce ``broad"
oscillations in the solutions of the original system. These are not
very realistic but our goal here is to test how well the method
performs for ``large perturbations".
 The magnitude of perturbations is shown in Figure \ref{coeffs}. In all the
numerical simulations, $ Z_{1} $ and $ S_{1} $  were oscillatory but
we used the constants $ Z_{2}=5.0\times 10 ^{-2} $ and $ S_{2}=12.0
\times 10^{-2} $.

\begin{figure}
\centering
\includegraphics*[width=9.5cm]{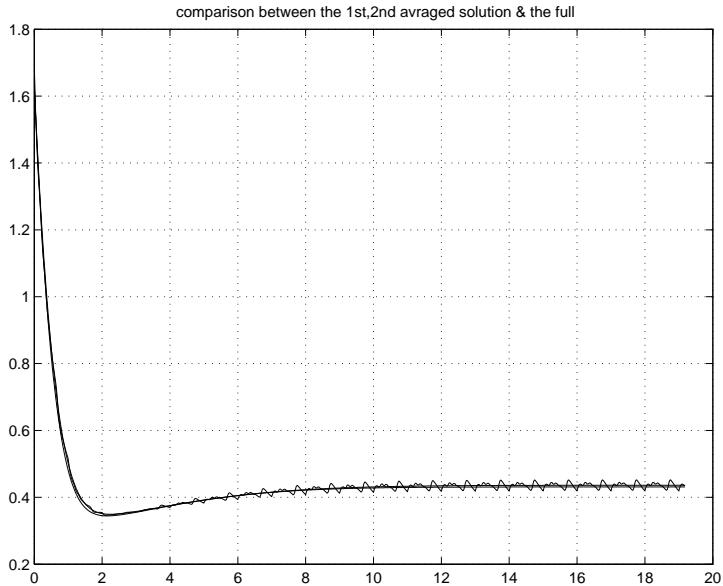}
\caption{\label{ave12-2nd} The figure shows the comparison between
the full solution and the $1^{st}$ and the $2^{nd}$ averaged
solutions, on the $2^{nd}$ component, for the total time of
simulation.}
\end{figure}

\subsection{Gain in numerical efficiency}\label{GCPU}

The original system is integrated by standard methods (Euler,
Runge--Kutta,...) with step size $\delta t$, which has to be small
enough to resolve the oscillations induced by the forcing terms
$Z_i$ and $S_i$. The averaged system $\overline{M2}$ or
$\overline{M2}^{(2)}$ is integrated with step size $\triangle t$,
which is typically ten times larger than $\delta t$.

A key point in numerical efficiency is the regularity of the
solutions of the averaged system. Indeed, if the solutions of the
averaged system are smooth enough in the sense that they do not show
multiple oscillations over one forcing period,  which is one day in
our case, we can integrate the averaged system with a large step
size $\triangle t$, without loss of regularity. This fact for system
(\ref{chemicalsystem}) is pointed out in Figure \ref{ave12-2nd}, for
instance, where the solutions of the averaged systems
$\overline{M2}$ and $\overline{M2}^{(2)}$, on the second component,
are those which do not show multiple oscillations over one day. This
fact is well known in the $\epsilon$-dependent case where the drift
described by the averaged system can be integrated with a step size
chosen to be $\frac{1}{\epsilon}$ times larger than for the
non-averaged system \cite{a}.

In order to investigate how well the solutions of the averaged
system corrected up to order one approximate that of the original
one, we did the following:
\begin{enumerate}
\item[\textrm{(i)}] For a given initial condition for the original
system, $x(0)$, we compute the $1^{st}$-approximation of the
$1^{st}$-type $\overline{x}^{(1,1)}(0)$ and  the
$1^{st}$-approximation of the $2^{nd}$-type
$\overline{x}^{(2,1)}(0)$ given by (\ref{1stappcondinit}) and
(\ref{2ndappinit}), of the initial conditions $\overline{x}(0)$ and
$\overline{x}^{(2)}(0)$, respectively.

 \item[\textrm{(ii)}] Starting from these initial conditions, we compute
$\overline{x}$  and $\overline{x}^{(2)}$ by integrating
$\overline{M2}$ and $\overline{M2}^{(2)}$ with $\triangle t$ as step
size, by using a standard integrator. This costs less than
integrating $M2$ with a small enough step size that resolves the
oscillating forcing terms. Here it is sufficient to use the Euler
method.

\item[\textrm{(iii)}]  We do a simple linear interpolation on
$\overline{x}$ and $\overline{x}^{(2)}$ to obtain the solution on
the temporal grid defined by $\delta t$.

\item[\textrm{(iv)}]  According to  (\ref{1stpub}) (resp.
(\ref{2ndpub})) and the fact that the correctors are only composed
of integrals, (\ref{f23}) and (\ref{f24}) are used for computing, on
the grid defined by $\delta t$,  the first and the second pullback
based on solutions obtained in (ii).
\end{enumerate}

The numerical tests indicate that the solution of $M2$, obtained by
the procedure described through the items (i) to (iv), has accuracy
which is close to that obtained by integrating $M2$ itself by
classical methods, such as a RK4 scheme.  The CPU time linked with
our procedure depends essentially on the one defined for solving the
averaged systems $\overline{M2}$ or $\overline{M2}^{(2)}$ with step
size $\triangle t$, and on the one for the integral correction
procedure.

We observed that the use of the first pullback allows to obtain a
gain in CPU time nearly equal to $75$ percent when we use a large
step size for $\triangle t$, while retaining good accuracy and
regularity of the solutions,  as shown in the following subsection
where the tests and the related figures are for $\delta t =0.01$ and
$\triangle t= 0.1$.

\subsection{Accuracy achieved by the first and second
pullback}\label{Accu} One of the main reasons for computing the
second averaged system $\overline{M2}^{(2)}$ were  to obtain a
better approximation. In this section we demonstrate numerically
this fact for $\delta t=0.01$ and $\triangle t= 0.1$. Note that the
tests performed by RK4 on $M2$ with $\triangle t$ give no smooth
solutions (not shown), whereas, as shown in Figures \ref{osc12-1st}
and \ref{osc12-2nd}, this is not the case for solutions obtained by
our method described in (i)-(iv) of  \S \ref{GCPU}.

\begin{figure}
\begin{minipage}[b]{.85\linewidth}
  \centering
\includegraphics*[width=9.5cm]{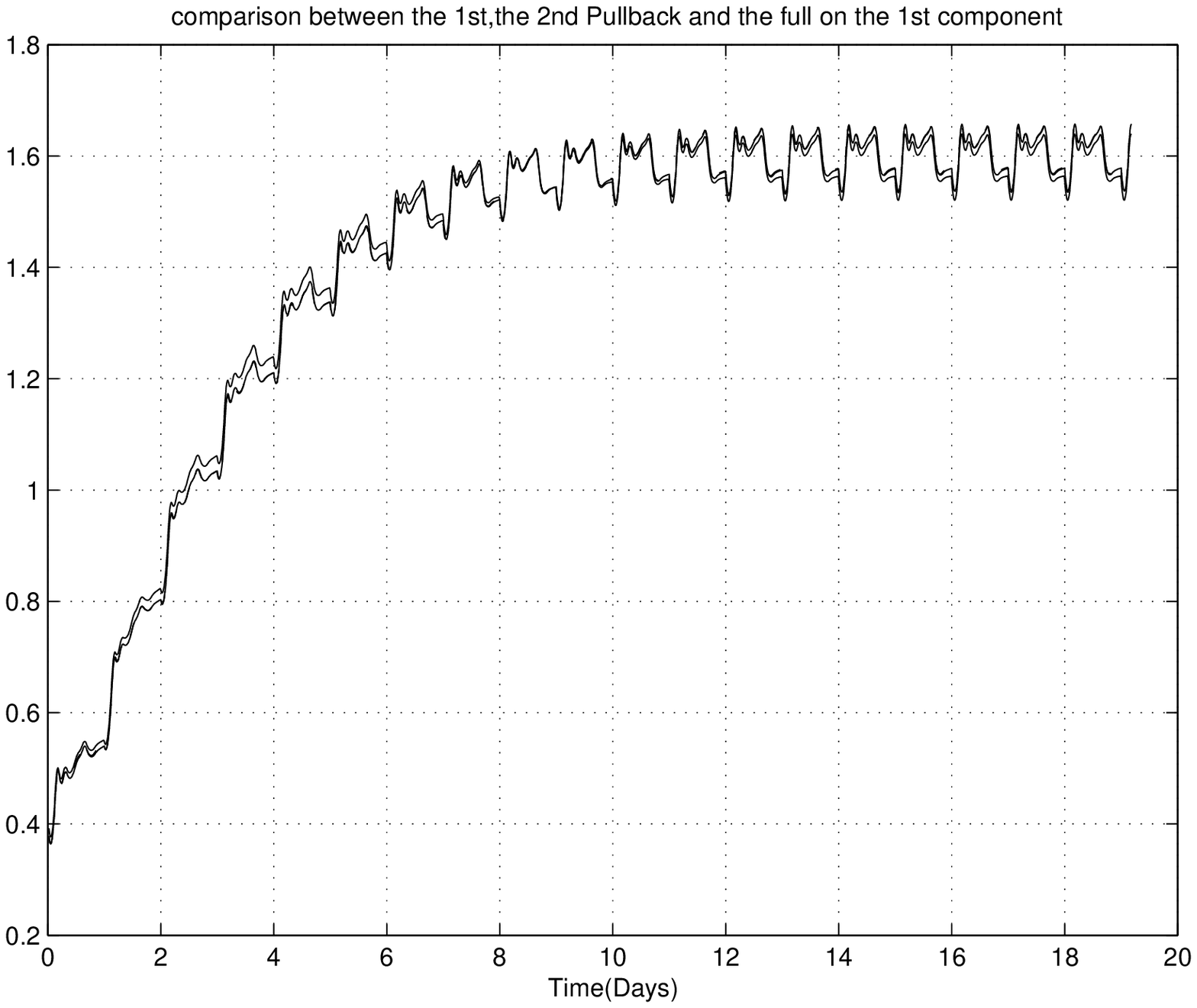}
\caption{\label{osc12-1st}
 The figure shows the comparison between the full solution and the $1^{st}$ and the $2^{nd}$ pullback solutions,
on the $1^{st}$ component, for the total time of simulation.}
 \vspace{0.1cm}
 \center{}\medskip
\end{minipage}
\hfill
\begin{minipage}[b]{.85\linewidth}
\centering
\includegraphics*[width=9.5cm]{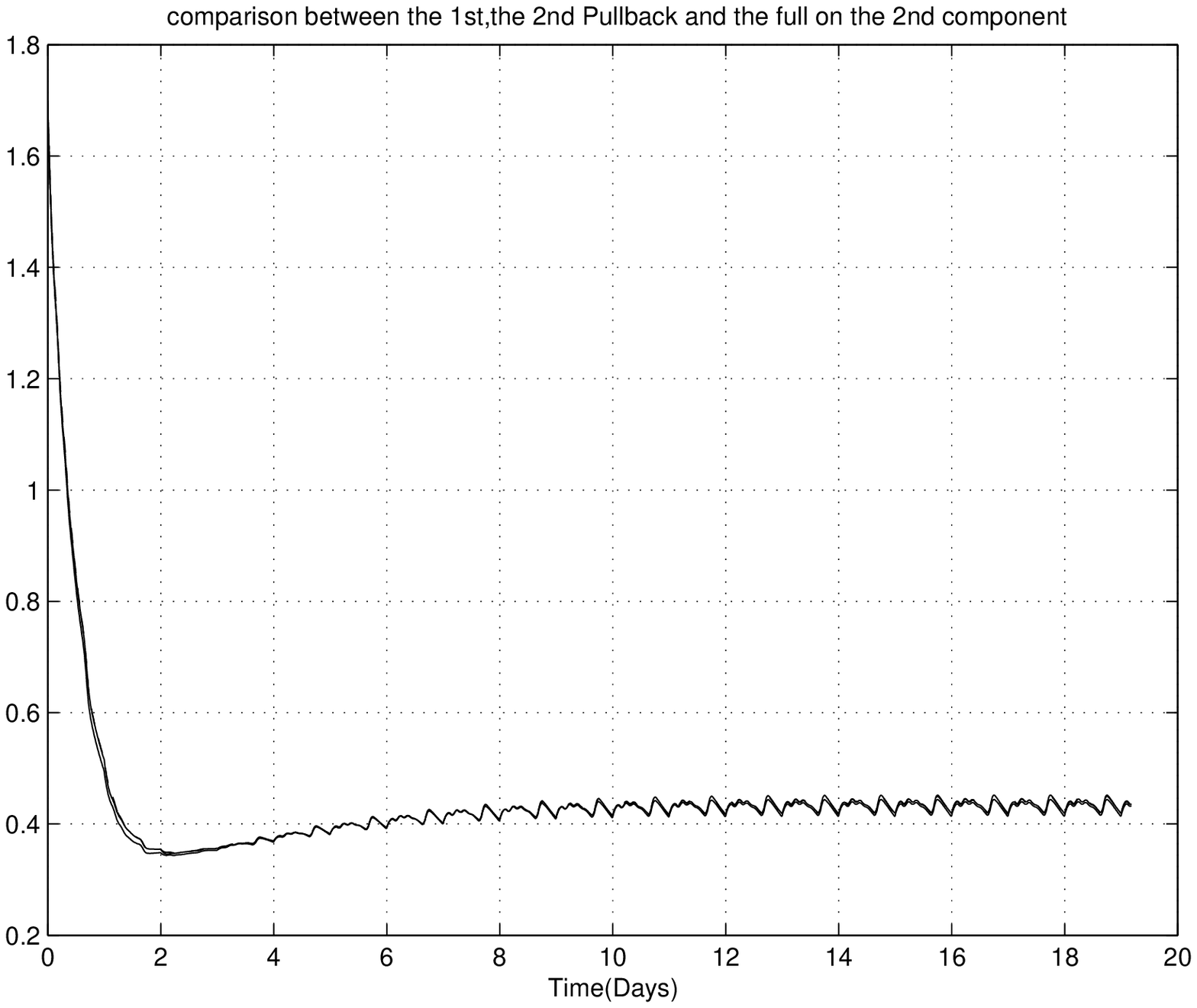}
\caption{\label{osc12-2nd} The figure shows the comparison between
the full solution and the $1^{st}$ and the $2^{nd}$ pullback
solutions, on the $2^{nd}$ component, for the total time of
simulation.}
\end{minipage}
\end{figure}

Based on the method of approximation used here, in many cases, the
second pullback is better than the first, and gives a solution which
is close to the one obtained by RK4.  In Figures \ref{osc12-1st} and
\ref{osc12-2nd}, we observe that we have coincidence of the
$1^{st}$-, $2^{nd}$-pullback and the full solution for the global
trend in each component. A more accurate analysis shows that the
$2^{nd}$-pullback is a better approximation than the first as shown
in Figure \ref{zoomosc-1st} and Figure \ref{zoomosc-2nd} where the
$2^{nd}$-pullback is represented by dash, the $1^{st}$-pullback by
dots, and the solution computed with RK4 by solid line. This fact is
made more evident by the analysis of the error in the supremum norm
as shown in Figure \ref{err-1st}, where the error produced by the
$2^{nd}$-pullback is under $6\times 10^{-3}$ and in Figure
\ref{err-2nd} where the error with the $2^{nd}$-pullback is
represented by dash.

Nevertheless, we can easily imagine that for a system of dimension
larger than two the CPU time required to calculate the
$2^{nd}$-pullback would increase with algebraic complexity of the
$2^{nd}$-averaged system (cf. \S \ref{Analysis}), but as we shall
see in \S \ref{period} the $2^{nd}$-pullback can be used for the
problem of determination of periodic solutions in a general
framework.

\begin{figure}
\begin{minipage}[b]{.85\linewidth}
  \centering
\includegraphics*[width=9.5cm]{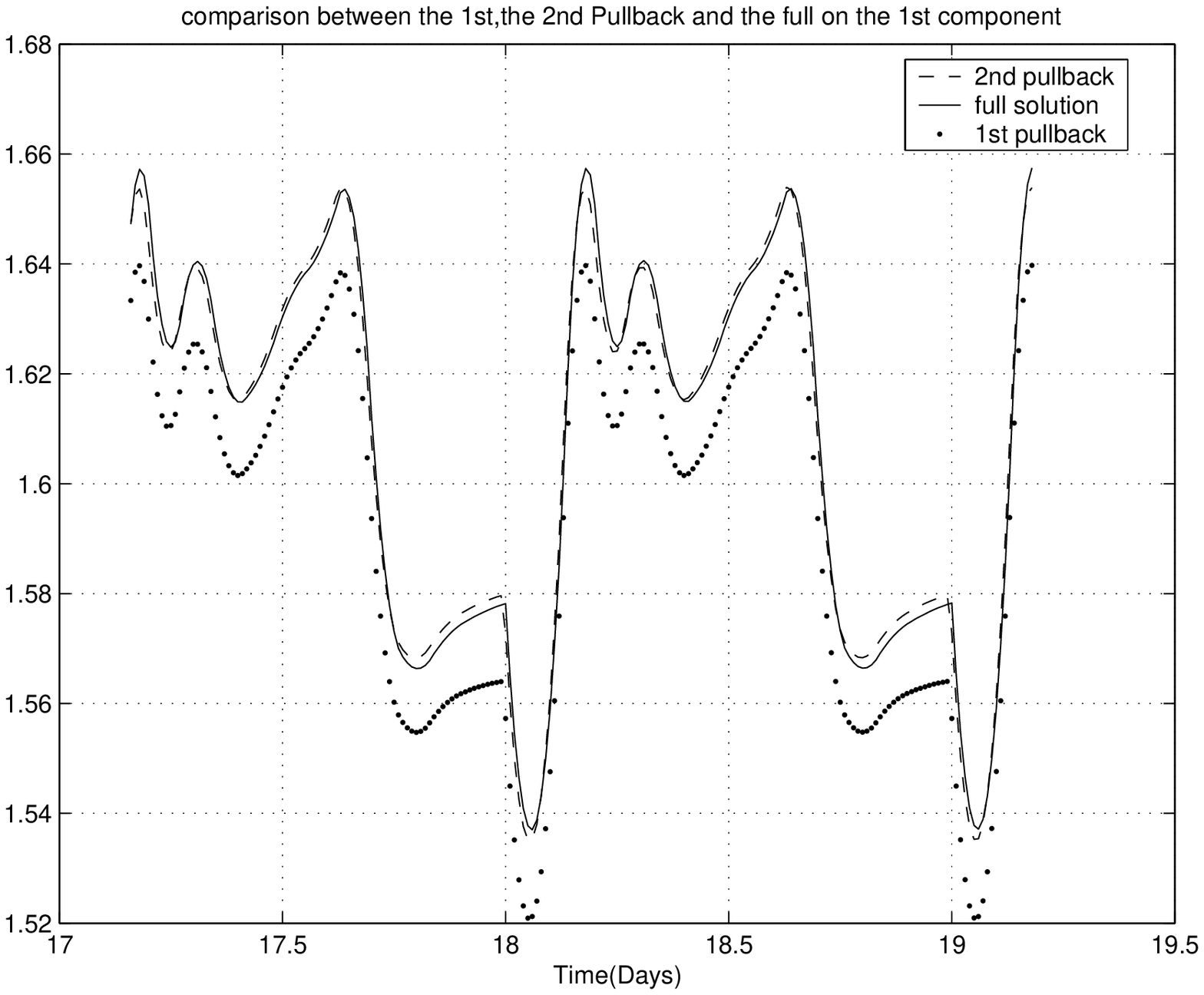}
\caption{\label{zoomosc-1st}The figure shows the comparison between
full solution and the $1^{st}$ and the $2^{nd}$ pullback solutions,
on the $2^{nd}$ component, for the periodic regime.}
 \vspace{0.1cm}
 \center{}\medskip
\end{minipage}
\hfill
\begin{minipage}[b]{.85\linewidth}
\centering
\includegraphics*[width=9.5cm]{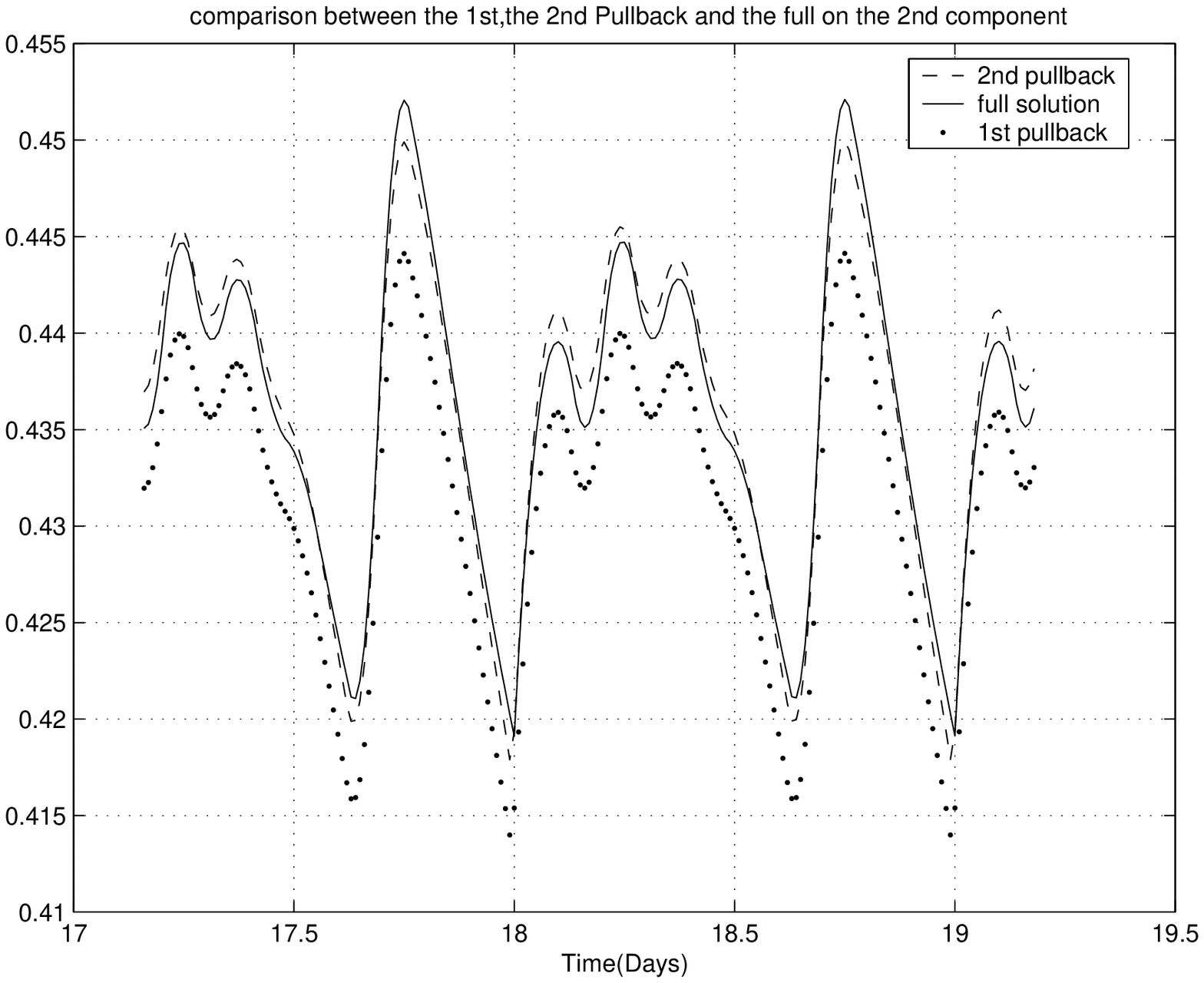}
\caption{\label{zoomosc-2nd} The figure shows the comparison between
full solution and the $1^{st}$ and the $2^{nd}$ pullback solutions,
on the $2^{nd}$ component, for the periodic regime.}
\end{minipage}
\end{figure}

The tests presented in this study give satisfactory results, an
accuracy of order $1 \times 10^{-3}$ for the second pullback.
Moreover, we have to note that this accuracy is obtained with
oscillations actually ``far" from their averages (see Figure
\ref{coeffs} again) producing oscillations, of magnitude about
$1\times 10^{-1}$, on the full solution obtained by RK4, with a
significant gain in CPU time compared to the standard method.

Therefore we can conclude that the Lie transforms averaging method
developed here gives a rigorous setting for constructing the
correctors and $\overline{M2}^{(2)}$ to provide numerical
approximations even for relatively large perturbations in time
induced by the forcing terms. Furthermore, according to the tests
performed in this work, the $2^{nd}$-averaged system analysis seems
to be more relevant than the first.

\begin{figure}[!hbtp]
\begin{minipage}[b]{.85\linewidth}
  \centering
\includegraphics*[width=9.5cm]{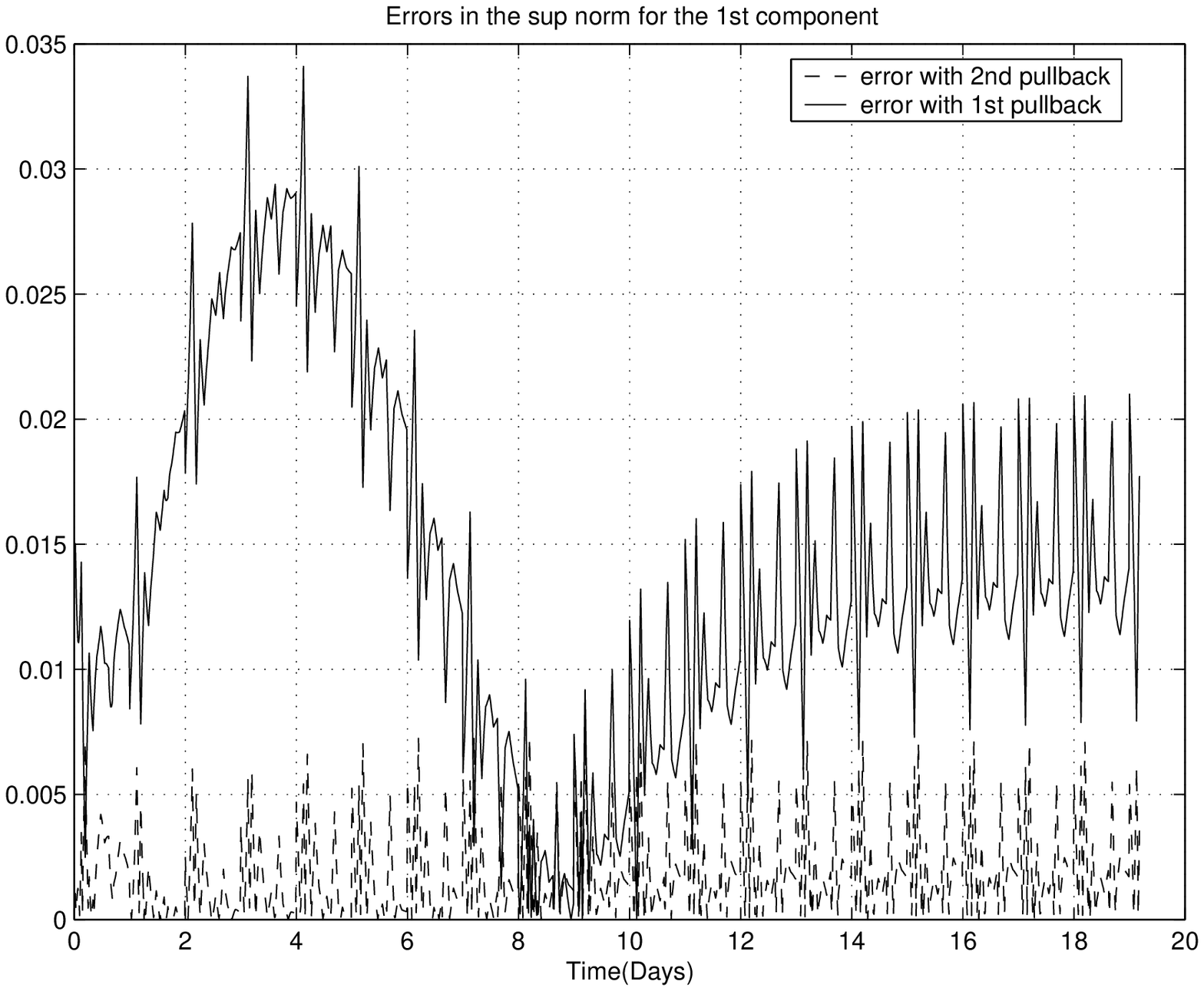}
\caption{\label{err-1st} The figure shows the error, in the supremum
norm, between the full solution and the $1^{st}$ and the $2^{nd}$
pullback solutions, on the $1^{st}$ component, for the total time of
simulation.}
 \vspace{0.1cm}
 \center{}\medskip
\end{minipage}
\hfill
\begin{minipage}[b]{.85\linewidth}
\centering
\includegraphics*[width=9.5cm]{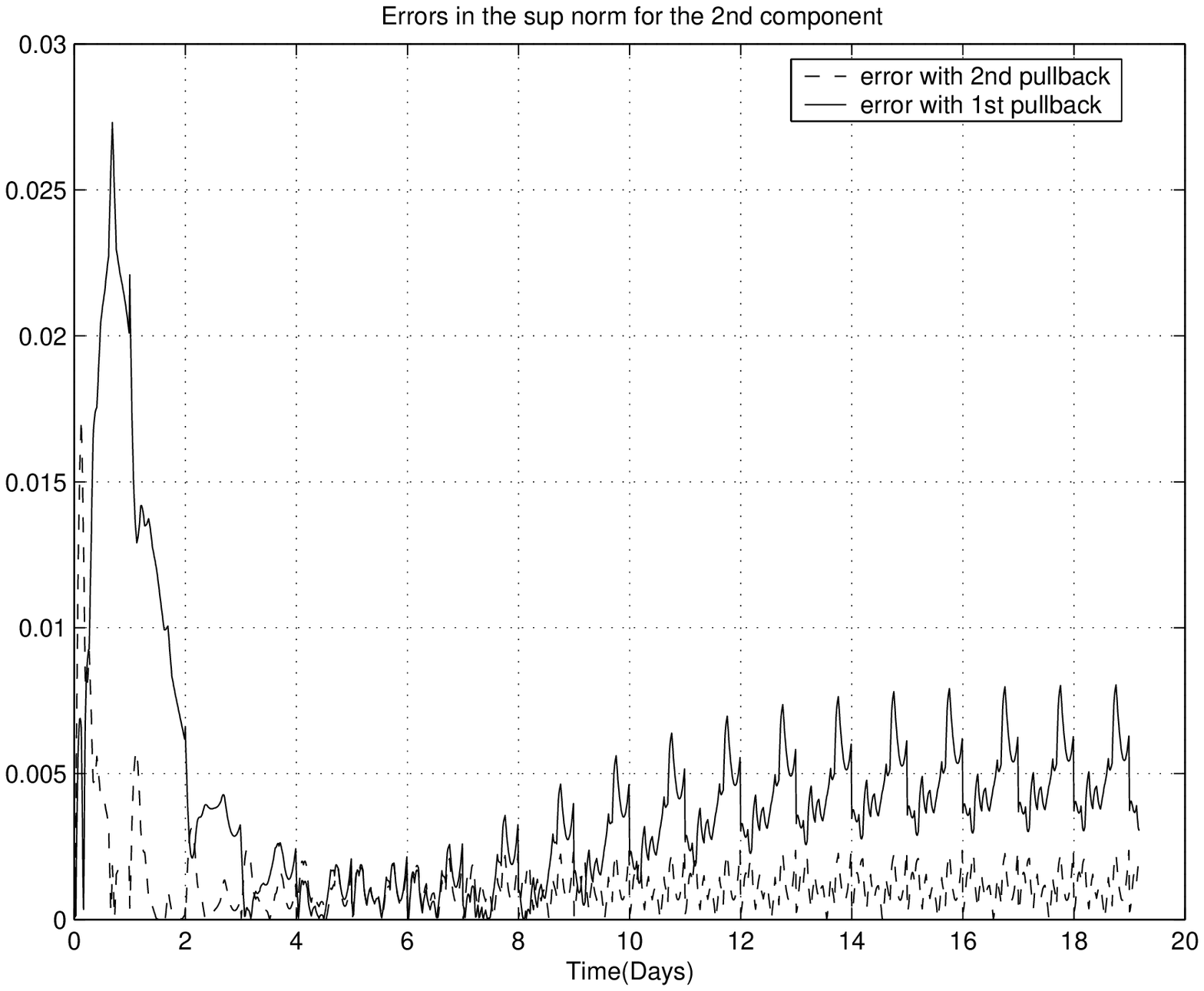}
\caption{\label{err-2nd} The figure shows the error, in the supremum
norm, between the full solution and the $1^{st}$ and the $2^{nd}$
pullback solutions, on the $2^{nd}$ component, for the total time of
simulation.}
\end{minipage}
\end{figure}
\section{Computing periodic solutions by higher-order averaged systems
analysis: some considerations}\label{period}

In the classical approach, i.e. in the $\epsilon$-dependent case, in
order to find periodic solutions the theory of averaging can be a
useful tool as shown, for instance, by theorem 4.1.1 (ii) of J.
Guckenheimer \& P. Holmes \cite{gh} for the first averaged system,
Hartano and  A.H.P. van der Burgh in \cite{har}, or A. Buic\u{a} \&
J. Llibre in \cite{bl} with relaxed assumptions, for the first,
second and third averaged systems, based on Brouwer degree theory.
All these results are linked to a maximal size of perturbation
$\epsilon_0$ under which the existence of a hyperbolic fixed point
$p_0$ of the first averaged system gives the existence of a unique
periodic solution of the original system, revolving around $p_0$.

As was explained in \S \ref{heuristic}, the analysis performed in
this paper permits to get such results for higher-order averaging
analysis based on explicit formulae (Proposition \ref{nthaveraging}
for averaging and Theorem \ref{diffeorepre} for corrections) by
revising the proof of classical results for first and second order
averaged systems, given in the literature (e.g. \cite{gh}). For
instance, if the $k$ ($k\geq 1$) first averaged systems vanish
identically, then a proof of existence and uniqueness of a
$T$-periodic solution in a $\epsilon$-neighborhood of a hyperbolic
fixed point of the $(k+1)^{th}$-averaged system can be given on the
basis of principles given in the proof of theorem 4.1.1 of
\cite{gh}, essentially by showing that the Poincar\'{e} maps of
nonautonomous and autonomous systems are $\epsilon$-closed.

In the light of such considerations, a question arises naturally: do
there exist such results for the $\epsilon$-independent case?

A preliminary analysis can be done on time $T$-maps. Indeed, we can
formulate the following lemma:

\vspace{1ex}

\noindent{\bf Lemma 6.1.}
\label{conjugPoinc}{\em Let $Y\in\mathcal{P}^{r}(\Omega)$, and
$Z\in\mathcal{C}^{r}(\Omega), (1\leq r\leq \infty)$. Then Y and Z
are $\mathcal{P}^{r}_{diff}$-equivalent if and only if their time
$T$-maps are $\mathcal{C}^{r}$-conjugate.}
 \vspace{2ex}

This lemma gives thereby a way to study sufficient conditions for
the existence of solutions of the nonlinear functional equation
(\ref{bigproblem}), which will be investigated in a forthcoming
paper. Note that the existence of time $T$-maps is realized under
the assumption $(\lambda)$.

In practice, if the conjugacy of time $T$-maps is achieved, we can
localize a $T$-periodic orbit of the nonautonomous system $Y$.
Indeed, suppose that the time $T$-map associated with an averaged
system $Z$ has a fixed point. Then by conjugacy, this fact still
holds for the time $T$-map associated with the nonautonomous system.
Making use of the proof of Lemma 6.1, we can observe that if
$(\pi\circ\phi_{1,t}\circ\mathcal{I}_t)_{t\in\mathbb{R}^{+}}$
denotes the solution of $(\ref{bigproblem})$ obtained by Lie
transforms, then $\pi\circ\phi_{1,o}^{-1}\circ\mathcal{I}_0$ is the
diffeomorphism realizing the conjugacy between time $T$-maps
associated with $Y$ and $Z$. Therefore Proposition
\ref{inverserepre} and Theorem \ref{diffeorepre} allow us to compute
an approximation of this diffeomorphism, which can be applied to a
fixed point $\eta$ associated with the system $Z$, leading to an
approximation $\xi_0$ of an initial datum lying on the $T$-periodic
orbit of the nonautonomous system which is  in
$\mathcal{P}_{diff}^{\infty}$-correspondence with $\eta$. Thus, by a
standard integrator based at $\xi_0$, we can compute an
approximation of this $T$-periodic solution. Such a method can be a
useful tool for localizing $T$-periodic solutions of $T$-periodic
nonautonomous dissipative systems, which are known to exist, in this
case,  as shown by classical results on the topic (cf. \cite [p.
235]{kra}).

This procedure is relevant for our system (\ref{chemicalsystem}), as
proved by elementary analysis. Indeed, there exists a hyperbolic
fixed point $p_0=(1.584,0.431)^{T}$ of $\overline{M2}$ and, as we
can see on Figures \ref{osc12-1st}, \ref{zoomosc-1st} and Figures
\ref{osc12-2nd}, \ref{zoomosc-2nd}, the first pullback solution is a
good approximation of the periodic solution revolving around the
fixed point.

Finally, note that the notion of equivalence  considered in this
paper (Definition \ref{Equivalence}) is the appropriate  one from
the numerical perspective described here in order to compute
$T$-periodic solutions, in view of the fact that the main ingredient
was to achieve a correspondence between fixed points (with all the
periods) of an autonomous system and $T$-periodic orbits of a
nonautonomous one.

\section{Appendix 1: Solution of the time-dependent pullback problem
via Lie transforms}\label{appendix1} Let $p$ be an integer greater
than or equal to one. Let $A(\tau, x)$ be a smooth vector field on
$\mathbb{R} \times \mathbb{R}^{p}$ expanded in powers of $\tau $ as
\begin{equation}
A(\tau,x)=A_{\tau }(x)=\sum_{n\geq 0}\frac{\tau ^{n}}{n!}
A_{n}^{(0)}(x), \label{f3}
\end{equation}
and let $H(\tau,t,x)$ be a smooth vector field on $\mathbb{R}
\times\mathbb{R}^{+}\times \mathbb{R}^{p}$ expanded as,
\begin{equation}
H(\tau ,t,\xi )=H_{\tau }(t,\xi )=H_{\tau ,t}(\xi )=\sum_{n\geq
0}\frac{\tau ^{n}}{n!} H_{n}(\xi ,t)=\sum_{n\geq 0}\frac{\tau
^{n}}{n!}H_{n,t}(\xi ),  \label{f4}
\end{equation}
where $t$ is $\it{fixed}$. Denote by $\Xi_t$ the {\em semiflow}
generated by $H_t$, namely the semiflow of
\begin{equation}\label{systemeSH}
S_{H_{t}}\left\{
\begin{array}{l}
\frac{d\xi }{d\tau }=H(\tau ,t,\xi ) \\
\end{array}
\right. ;\, \xi \in \mathbb{R}^{p},\tau \in \mathbb{R}.
\end{equation}
Define the Lie transform {\em generated} by $H_t$ of $A$ evaluated
at $\tau$, denoted by $L(H_t)(\tau)\cdot A$, as the vector field:
\begin{equation}\label{Lietransf}
L(H_t)(\tau)\cdot A=A_{0}^{(0)}+\sum_{m\geq 1}\frac{\tau ^{m}
}{m!}A_{0,t}^{(m)},
\end{equation}
where the sequence of vector fields (acting on $\mathbb{R}^{p}$)
 $\{A_{0,t}^{(m)}\}$, is calculated from the sequence $\{A_{n}^{(0)}\}$ given by (\ref{f3}),
 using the recursive formula:
\begin{equation}  \label{f10}
A_{n,t}^{(i+1)}=A_{n+1,t}^{(i)}+\sum_{k=0}^{n}C_{n}^{k}L_{H_{n-k,t}}A_{k,t}^{(i)}
;\mbox{ for all }(i,n) \in \mathbb{Z}^{2}_{+},
\end{equation}
with $L_{H_{n-k,t}}$ expressing the Lie derivative with respect to
$H_{n-k,t}$ (see e.g. \cite{ch, v}, for details).

 A classical result is the following theorem which expresses the
pullback of a vector field as a Lie transform which can be proved
easily, by adapting, for instance, the work of \cite{her} relying on
Taylor series expansion and the key formula
\begin{equation}
\frac{d}{d\tau}(\Xi_{\tau,t})^{\ast}A_{\tau}=(\Xi_{\tau,t})^{\ast}(L_{H_{\tau,t}}A_{\tau}+\partial_{\tau}A_{\tau})
\end{equation}
from the realm of differential geometry (e.g., \cite{am, l}).

\vspace{1ex}

\noindent{\bf Theorem 7.1.}\label{reprepullback} {\em Let $\tau \in
\mathbb{R}$, as above. The pullback at time $t$ of $A_\tau$ by
$\Xi_{\tau,t}$ generated by $H_t$ is the Lie transform generated by
$H_t$ of $A$, evaluated at $\tau$, that is:
$$(\Xi_{\tau,t})^{\ast}A_{\tau}=L(H_{t})(\tau)\cdot A, \mbox{ for all
t}
 \in \mathbb{R}^{+}.$$}

Let $B$ be another smooth vector field on $\mathbb{R} \times
\mathbb{R}^{p}$ with formal expansion
\begin{equation}
B_{\tau }=\sum_{m\geq 0}\frac{\tau ^{m}}{m!}B_{m}.
\label{Bexpansion}
\end{equation}

One of the advantages of considering pullback as a Lie transform, is
that the framework of the latter yields linear conditions for
finding $\Xi_{\tau,t}$ which satisfies
$(\Xi_{\tau,t})^{\ast}A_{\tau}=B_{\tau}$. Indeed, using the formal
power series expansions (\ref{Lietransf}) and (\ref{Bexpansion}),
$(\Xi_{\tau,t})^{\ast}A_{\tau}=B_{\tau}$ leads to a sequence of
recursive linear PDEs (e.g. \cite{v}), namely,
\begin{equation}\label{ELm}
A_{0,t}^{(m)}=B_{m},\, m\in \mathbb{Z}_{+},\, t \in \mathbb{R}^{+},
\end{equation}
where the $H_{n,t}$ ($n \leq m$) present in (\ref{f4}) are the
unknowns contained in $A_{0,t}^{(m)}$, determining by this way the
generator $H_t$ of the Lie transform.

These equations are usually called Lie's equations.

As a consequence, we can state the following corollary of Theorem
7.1:

\vspace{1ex}

\noindent {\bf Corollary 7.2.}\label{existdiffeo} {\em A necessary
condition for the existence of a two-parameter family of
diffeomorphisms $(\Xi_{\tau,t})_{(\tau,t)\in
\mathbb{R}\times\mathbb{R}^{+}}$, generated by the one-parameter
family of vector fields $(H_t)_{t\in \mathbb{R}^{+}}$ given by
(\ref{f4}), such that $(\Xi _{\tau,t })^{\ast} A_{\tau}=B_{\tau }$
for all $t\geq 0$ and all $\tau\in\mathbb{R}$, is that Lie's
equations (\ref{ELm}) are solvable for the unknowns $H_{m,t}$ for
all $(m,t)\in\mathbb{Z}_{+}\times\mathbb{R}^{+}$.}

\section{Appendix 2 (added for the ArXiv version): Proof of  Lemma 6.1 of Chekroun {\it et al.}, DCDS, {\bf 14}(4), 2006.}
We give here the proof of the Lemma 6.1 of Chekroun {\it et al.},
DCDS, {\bf 14}(4), 2006, that corresponds here also to Lemma 6.1 of
the present manuscript.

\begin{proof}{\em of Lemma 6.1.} As the vector field are assumed to be
complete on $\Omega$, there exists an open subset $V\subset \Omega$,
such that the time $T$-map associated with $Z$, denoted by $P$ is
well-defined on $V$. We denote by $\widetilde{P}$ the time $T$-map
associated with the periodic vector field $Y$.

First of all, we suppose that $Y$ and $Z$ are
$\mathcal{P}^{r}_{diff}$-equivalent, so there exists a map
$\Phi\in\mathcal{P}_d^{k}(\Omega)$, such that:
\begin{equation}\label{ConjugSol}
x(t,x_0)=\Phi_t(z(t,\Phi_0^{-1}(x_0))), \mbox{ for all }, x_0\in
\Omega,
\end{equation}
using the notations of Definition \ref{Equivalence}.

Since for all $\xi \in V$,  $\widetilde{P}(\xi)=x(T,\xi)$ and
$P(\xi)=z(T,\xi)$, then we get from (\ref{ConjugSol}):
$$\widetilde{P}\circ \Phi_0(x_0)=x(T, \Phi_0(x_0))=\Phi_T(z(T,x_0))=\Phi_0(z(T,x_0))=\Phi_0\circ P(x_0),$$
and the necessary condition of the lemma is satisfied.

For the sufficient condition, let us denote by $z_t:z_0\rightarrow
z(t,z_0)$ the flow of $Z$ such that $z(0,z_0)=z_0$, and by
$x_t:x_0\rightarrow x(t,x_0)$, the semiflow of $Y$ such that
$x(0,x_0)=x_0$. Then $z_{t+T}=z_t\circ P$ (at least in $V$) and
$x_{t+T}=x_t\circ \widetilde{P}$. By assumption there exists a
$\mathcal{C}^{r}$-diffeomorphism $H$ such that $\widetilde{P}\circ H
=H\circ P$. Let us introduce for all $t \geq 0 $,
$$H_t=x_t\circ H\circ (z_t)^{-1}.$$
Then $H_t$ is a time-dependent $\mathcal{C}^{r}$-diffeomorphism with
$H_0=H$. Obviously $H_t$ carries solution $z(t,z_0)$ into
$x(t,H^{-1}(z_0))$; and we have:
\begin{equation}
H_{t+T}=x_{t+T}\circ H \circ (z_{t+T})^{-1}=x_t\circ
\widetilde{P}\circ H\circ P^{-1}\circ (z_t)^{-1} =x_t \circ H \circ
(z_t)^{-1}=H_t,
\end{equation}
that gives the $T$-periodicity of the change of variables.
\end{proof}

\section*{Acknowledgements} The authors would like to thank the anonymous referee for his insightful
comments and valuable suggestions. The research of M. Ghil and F.
Varadi was supported in part by the U.S. National Science Foundation
under a grant from the Divisions of Atmospheric Sciences and of
Mathematical Sciences.

\vspace{2ex}
\medskip

\end{document}